\numberwithin{equation}{section}
\begin{document}

\newtheorem{thm}{Theorem}[section]
\newtheorem{prop}[thm]{Proposition}
\newtheorem{lem}[thm]{Lemma}
\newtheorem{cor}[thm]{Corollary}
\newtheorem{rem}[thm]{Remark}
\newtheorem*{defn}{Definition}

\newcommand{\DD}{\mathbb{D}}
\newcommand{\NN}{\mathbb{N}}
\newcommand{\ZZ}{\mathbb{Z}}
\newcommand{\QQ}{\mathbb{Q}}
\newcommand{\RR}{\mathbb{R}}
\newcommand{\CC}{\mathbb{C}}
\renewcommand{\SS}{\mathbb{S}}

\renewcommand{\theequation}{\arabic{section}.\arabic{equation}}

\def\Xint#1{\mathchoice
{\XXint\displaystyle\textstyle{#1}}%
{\XXint\textstyle\scriptstyle{#1}}%
{\XXint\scriptstyle\scriptscriptstyle{#1}}%
{\XXint\scriptscriptstyle\scriptscriptstyle{#1}}%
\!\int}
\def\XXint#1#2#3{{\setbox0=\hbox{$#1{#2#3}{\int}$}
\vcenter{\hbox{$#2#3$}}\kern-.5\wd0}}
\def\ddashint{\Xint=}
\def\dashint{\Xint-}

\newcommand{\Erfc}{\mathop{\mathrm{Erfc}}}    
\newcommand{\supp}{\mathop{\mathrm{supp}}}    
\newcommand{\re}{\mathop{\mathrm{Re}}}   
\newcommand{\im}{\mathop{\mathrm{Im}}}   
\newcommand{\dist}{\mathop{\mathrm{dist}}}  
\newcommand{\link}{\mathop{\circ\kern-.35em -}}
\newcommand{\spn}{\mathop{\mathrm{span}}}   
\newcommand{\ind}{\mathop{\mathrm{ind}}}   
\newcommand{\rank}{\mathop{\mathrm{rank}}}   
\newcommand{\ol}{\overline}
\newcommand{\pa}{\partial}
\newcommand{\ul}{\underline}
\newcommand{\diam}{\mathrm{diam}}
\newcommand{\lan}{\langle}
\newcommand{\ran}{\rangle}
\newcommand{\tr}{\mathop{\mathrm{tr}}}
\newcommand{\diag}{\mathop{\mathrm{diag}}}
\newcommand{\dv}{\mathop{\mathrm{div}}}
\newcommand{\na}{\nabla}
\newcommand{\nr}{\Vert}

\newcommand{\al}{\alpha}
\newcommand{\be}{\beta}
\newcommand{\ga}{\gamma}  
\newcommand{\Ga}{\Gamma}
\newcommand{\de}{\delta}
\newcommand{\De}{\Delta}
\newcommand{\ve}{\varepsilon}
\newcommand{\fhi}{\varphi} 
\newcommand{\la}{\lambda}
\newcommand{\La}{\Lambda}    
\newcommand{\ka}{\kappa}
\newcommand{\si}{\sigma}
\newcommand{\Si}{\Sigma}
\newcommand{\te}{\theta}
\newcommand{\zi}{\zeta}
\newcommand{\om}{\omega}
\newcommand{\Om}{\Omega}

\newcommand{\cC}{\mathcal{C}}
\newcommand{\cG}{{\mathcal G}}
\newcommand{\cH}{{\mathcal H}}
\newcommand{\cI}{{\mathcal I}}
\newcommand{\cJ}{{\mathcal J}}
\newcommand{\cK}{{\mathcal K}}
\newcommand{\cL}{{\mathcal L}}
\newcommand{\cM}{\mathcal{M}}
\newcommand{\cN}{{\mathcal N}}
\newcommand{\cP}{\mathcal{P}}
\newcommand{\cR}{{\mathcal R}}
\newcommand{\cS}{{\mathcal S}}
\newcommand{\cT}{{\mathcal T}}
\newcommand{\cU}{{\mathcal U}}
\newcommand{\cX}{\mathcal{X}}

\newcommand{\cp}{\sqrt{\frac{p}{p-1}}}
\newcommand{\Cp}{\frac{p}{p-1}}

\title[Asymptotics for the game-theoretic $p$-laplacian]{Asymptotics for the resolvent equation associated to the game-theoretic $p$-laplacian}

\author{Diego Berti}
\address{Dipartimento di Matematica ed Informatica ``U.~Dini'',
Universit\` a di Firenze, viale Morgagni 67/A, 50134 Firenze, Italy.}
    \email{diego.berti@unifi.it}

\author{Rolando Magnanini} 
\address{Dipartimento di Matematica ed Informatica ``U.~Dini'',
Universit\` a di Firenze, viale Morgagni 67/A, 50134 Firenze, Italy.}
    \email{magnanin@math.unifi.it}
    \urladdr{http://web.math.unifi.it/users/magnanin}

\begin{abstract}
We consider the (viscosity) solution $u^\ve$ of the elliptic equation $\ve^2\De_p^G u= u$ in a domain (not necessarily bounded), satisfying $u=1$ on its boundary. Here, $\De_p^G$ is the {\it game-theoretic or normalized $p$-laplacian}.  We derive asymptotic formulas for $\ve\to 0^+$ involving the values of $u^\ve$, in the spirit of Varadhan's work \cite{Va}, and its $q$-mean on balls touching the boundary, thus generalizing that obtained in \cite{MS-AM} for $p=q=2$. As in a related parabolic problem, investigated in \cite{BM}, we link the relevant asymptotic behavior to the geometry of the domain.
\end{abstract}

\keywords{Game-theoretic $p$-laplacian, asymptotic formulas, $q$-means}
    \subjclass[2010]{Primary 35J92; Secondary 35J25, 35B40, 35Q91}

\maketitle

\raggedbottom

\section{Introduction}
In this paper we consider (viscosity) solutions $u^\ve$ of the following one-parameter family of problems:
\begin{eqnarray}
&u-\ve^2\De_p^G u=0 \ &\mbox{ in } \ \Om, \label{G-elliptic} \\
&u=1  \ &\mbox{ on } \ \Ga, \label{elliptic-boundary}
\end{eqnarray}
where $\Om$ is a domain in $\RR^N$, $N\ge 2$, $\Ga$ is its boundary and $\ve>0$. The operator $\De_p^G$ in \eqref{G-elliptic} is the {\it game-theoretic (or normalized or homogeneous) $p$-laplacian}, that is formally defined by
\begin{equation}
\label{G-laplace}
\De_p^Gu=\frac1{p}\,|\na u|^{2-p}\dv\left\{|\na u|^{p-2}\,\na u\right\}
\end{equation}
if $p\in[1,\infty)$, and by its limit as $p\to\infty$,
\begin{equation}
 \label{infinity-laplacian}
 \De_\infty^G u=\frac{\big\lan \na^2 u\,\na u,\na u\big\ran}{|\na u|^2},
\end{equation}
in the extremal case $p=\infty$. Note that
$\De_2^G=\frac1{2}\De$, where $\De$ is the classical (linear) Laplace operator. Otherwise,  $\De_p^G$ is a non-linear operator in non-divergence form, with possibly discontinuos coefficients at points in which the gradient $\na u$ is zero. Nevertheless, differently from the case of the variational $p$-laplacian $\De_pu=\dv\left\{|\na u|^{p-2}\,\na u\right\}$, $\De_p^G$ inherits from $\De$, for any $p\in(1,\infty]$, its $1$-homogeneity and structure of second-order uniformly elliptic operator. 
The lack of continuity of the coefficients at the critical points of $u$ and the non-divergence form of $\De_p^G$ make the use of the theory of {\it viscosity solutions} prefereable to that of weak (variational) solutions. On this issue, besides the classical treatises \cite{CGG} and \cite{CIL}, we mention a list of more recent references that specifically concern $\De_p^G$: \cite{AP}, \cite{APR}, \cite{BG-IUMJ}, \cite{BG-CPAA}, \cite{Do}, \cite{JK}, \cite{KH}, \cite{KKK}, \cite{MPR} and \cite{MPR2}.
\par
Problems involving $\De_p^G$ appear in a variety of applications. First of all, as the name evokes, the solution has a natural game-theoretic interpretation: the solution of problem \eqref{G-elliptic}-\eqref{elliptic-boundary} represents the limiting value of a two-player, zero-sum game, called  {\it tug-of-war} (for $p=\infty$) 
or {\it tug-of-war with noise} (for $1<p<\infty$), 
see for instance \cite{PS},\cite{PSSW}, \cite{MPR} and \cite{MPR2}.
\par 
In recent years there has been a growing interest for equations involving $\De_p^G$ in relation to numerical methods for {\it image enhancement or restoration} (see \cite{Do} and \cite{BSA}). Typically, for a possibly corrupted image represented by a function $u_0$, it is considered an evolutionary process based on $\De_p^G$ with initial data $u_0$ and homogeneous Neumann boundary conditions. As explained in \cite{Do}, the different choice of $p$ affects in which direction the brightness evolves; the $1$-homogeneity of $\De_p^G$ ensures that such an evolution does not depend on the brightness of the image. The relation between the solution of the parabolic problem and the parametrized elliptic equation \eqref{G-elliptic} is examined in \cite{BSA} (where the backward version of the same problem is considered) for the classical $p$-laplacian, and can be extended to the case of $\De_p^G$ in hand.
\par
More classical applications were considered in the linear case ($p=2$). 
Indeed, in the context of {\it large deviations theory} (see \cite{FW}, \cite{Va}, \cite{EI}), random differential
 equations with small noise intensities are considered and the profile for small values of $\ve$ of the solutions of \eqref{G-elliptic}-\eqref{elliptic-boundary} is related to the behaviour of the exit time of a certain stochastic process. In geometrical terms, that behavior is encoded in the following formula:
\begin{equation}
\label{eq:Va}
\lim_{\ve\to 0^+}
  \ve\log u^\ve(x)=-d_\Ga(x),
\end{equation}
uniformly on a bounded domain $\ol\Om$, where $d_\Ga(x)$ denotes the distance of a point $x\in\Om$ to $\Ga$.
\par
Further evidence of the influence of geometry on the asymptotic behavior of solutions of elliptic and parabolic equations for small values of the relevant parameter was given by the second author of this note and S. Sakaguchi in a series of papers both in the linear case (\cite{MS-AM}, \cite{MS-IUMJ}, \cite{MS-JDE}, \cite{MS-MMAS}, \cite{MM-NA}) and in certain non-linear contexts (\cite{MS-PRSE}, \cite{MS-AIHP}, \cite{MS-JDE2}, \cite{Sa}), concerning both initial-boundary value problems (\cite{MS-AM}, \cite{MS-PRSE}, \cite{MS-AIHP}) and initial-value problems (\cite{MPS}, \cite{MPrS}), and even for two-phase problems (\cite{Sa2}, \cite{Sa3}, \cite{CaMS}). For instance, in \cite[Theorem 2.3]{MS-AM}, in the case $p=2$ for problem \eqref{G-elliptic}-\eqref{elliptic-boundary}, the following formula involving the mean value of $u^\ve$ was established: 
\begin{equation}
 \label{eq:MS}
\lim_{\ve\to 0^+}
\left(\frac{R}{\ve}\right)^{\frac{N-1}{2}}\dashint_{\pa B_R(x)}u^\ve(y)\,dS_y=\frac{c_N}{\sqrt{\Pi_\Ga(y_x)}};
\end{equation}
$c_N$ is a constant that can be derived from \cite[Theorem 2.3]{MS-AM}. Here, for a given $x\in\Om$ with $d_\Ga(x)=R$, $B_R(x)$ is assumed to be a ball contained in $\Om$ and such that $(\RR^N\setminus\Om)\cap\pa B_R(x)=\{ y_x\}$, $\ka_1,\dots,\ka_{N-1}$ denote the principal curvatures (with respect to the inward normal) of $\Ga$ at points of $\Ga$, and  
\begin{equation}
 \label{eq:Pi-gamma}
\Pi_\Ga(y_x)=\prod\limits_{j=1}^{N-1}
\Bigl[1-R\,\ka_j(y_x)\Bigr].
\end{equation}
\par 
In this paper, we shall derive asymptotic formulas similar to \eqref{eq:Va} and \eqref{eq:MS} for the solution $u^\ve$ of \eqref{G-elliptic}-\eqref{elliptic-boundary}, when $p\not=2$.
\par
First of all, in Section \ref{sec:first-order} we shall prove for $p\in(1,\infty]$ the validity of the following analog of \eqref{eq:Va}: 
\begin{equation}
\label{eq:elliptic-first-order}
 \lim_{\ve\to 0^+}
\ve\log \left[u^\ve(x)\right]=-\sqrt{p'}\,d_\Ga(x), \ \mbox{ for any } \ x\in\ol{\Om}.
\end{equation}
Here $p'$ denotes the conjugate exponent to $p$ (i.e. $1/p+1/p'=1$) and $\Om$ is required to merely satisfy the regularity assumption: $\Ga=\pa\left(\RR^N\setminus\ol\Om\right)$.
\par 
Asymptotics \eqref{eq:elliptic-first-order} is obtained, first, by considering the solutions of \eqref{G-elliptic}-\eqref{elliptic-boundary} in the two basic radial cases in which $\Om$ is either a ball or the exterior of its closure  (see Subsection \ref{sec:radial-case}) and, secondly, by employing them as barriers, by virtue of the comparison principle and taking advantage of the fact that in the radial cases the solutions of \eqref{G-elliptic}-\eqref{elliptic-boundary} are obtained explicitly. In fact, equation \eqref{G-elliptic} can be reduced to a {\it linear} ordinary differential equation when $u^\ve$ is radially symmetric.  The use of radial barriers, that is natural in the case of a $C^2$ domain, will also be adapted to a quite general case.
\par 
In Section \ref{sec:first-order} we shall also
investigate uniform estimates related to \eqref{eq:elliptic-first-order}. 
An interesting feature of such estimates is that their quality seems to depend on the parameter $p$. 
In fact, while for a quite large class of regular domains (that we call $C^{0,\om}$ domains and these include the scale of those with H\"older regularity) the following formula always holds uniformly on any compact subset of $\ol\Om$
\begin{equation}
\label{eq:uniform}
\ve\log\left(u^\ve \right)+\sqrt{p'}\,d_\Ga=O\left(\ve|\log \psi(\ve)|\right) \ \mbox{ as } \ \ve\to 0^+
\end{equation}
(here $\psi(\ve)=\psi_\om(\ve)$ vanishes with $\ve$ depending on the modulus of continuity $\om$), we see that, the right-hand side of \eqref{eq:uniform} improves to $O(\ve\log|\log\psi(\ve)|)$ if $p=N$, $O(\ve \log\ve)$ for $p\in (N,\infty)$,  and $O(\ve)$ if $p=\infty$ (see Theorem \ref{th:uniform-elliptic}).
\par 
In Section \ref{sec:asymptotics-second-order}, we shall derive a formula that greatly generalizes \eqref{eq:MS}: in fact, in Theorem \ref{th:q-mean} it is shown that for $1<p\le\infty$ and $1<q<\infty$ it
holds that
\begin{equation}
\label{q-mean}
\lim_{\ve\to 0^+}
\left(\frac{R}{\ve}\right)^{\frac{N+1}{2(q-1)}}\!\!\mu_{q,\ve}(x)=\frac{c_{N,q}}{\left\{(p')^{\frac{N+1}{2}}
\Pi_\Ga(y_x)\right\}^{\frac{1}{2(q-1)}}}
\end{equation}
(the exact value of the constant $c_{N,q}$ is given in Theorem \ref{th:q-mean}). Here, $\mu_{q,\ve}(x)$ is the {\it $q$-mean of} $u^\ve$ on a ball $B_R(x)$ touching $\Ga$ at only one point $y_x$; $\mu_{q,\ve}(x)$ is the unique $\mu\in\RR$ such that 
\begin{equation}
\label{q-mean-definition} 
\nr u^\ve-\mu\nr_{L^q(B_R(x))}\le\nr u^\ve-\la\nr_{L^q(B_R(x))} \ \mbox{ for all } \ \la\in\RR;
\end{equation}
this can be defined for every $q\in[1,\infty]$ (see \cite{IMW}). Notice that $\mu_{2,\ve}(x)$ is the standard mean value of $u^\ve$ on $B_R(x)$.
\par
 If $q=\infty$ we simply have that $\mu_{\infty,\ve}(x)\to 1/2$, which does not give any geometrical information. 
\par 
 Also to obtain \eqref{q-mean}, we first show that an appropriate formula holds for suitable spherical barriers for $u^\ve$ and hence we use the fact that $q$-means are monotonic with respect to (almost everywhere) partial order of functions on $B_R(x)$.
\par
Formulas \eqref{eq:elliptic-first-order} and \eqref{q-mean} are elliptic versions of similar formulas obtained in \cite{BM} for the viscosity solution $u=u(x,t)$ of the parabolic equation
\begin{equation*}
\label{G-parabolic}
u_t-\De_p^Gu=0 \ \mbox{ in } \ \Om\times(0,\infty),
\end{equation*}
subject to the boundary-initial data:
\begin{equation*}
\label{parabolic-boundary}
u=0\ \mbox{ on }\ \Om\times\{0\} \ \mbox{ and }\  u=1 \ \mbox{ on } \ \Ga\times(0,\infty).
\end{equation*}
Indeed, in \cite{BM} we obtained:
\begin{equation*}
  \label{eq:short-time-first-asymptotics}
  \lim_{t\to 0^+}4t\,\log u(x,t)=-p'\,d_\Ga(x)^2,
\end{equation*}
and 
\begin{equation*}
  \label{eq:parabolic-q-mean}
\lim_{t\to 0^+}
\left(\frac{R^2}{t}\right)^{\frac{N+1}{4(q-1)}}\mu_{q,t}(x)=\frac{C_{N,q}}{\left\{(p')^{\frac{N+1}{2}}\Pi_\Ga(y_x)\right\}^{\frac1{2(q-1)}}},
\end{equation*}
for some explicit constant $C_{N,q}$, where $\mu_{q,t}(x)$ is the $q$-mean of $u(\cdot,t)$ on $B_R(x)$.  It should be noticed that in the case $p=2$, due to the linearity of $\De$, the last formula and \eqref{q-mean} can be obtained from one another, since $u^\ve(x)$ and $u(x,t)$ are related by a Laplace transformation. When $p\not=2$, this is no longer possible and the elliptic and parabolic cases must be treated separately. Moreover, in the elliptic case, due to the availability of explicit barriers, 
we obtain the more accurate uniform estimate \eqref{eq:uniform} (compare with \cite[Eq. (2.18)]{BM}).
\par
We conclude this introduction by mentioning that the linearity of $\De$  was used in \cite{MS-AM} to derive radial symmetry of the so-called {\it stationary isothermic surfaces}, that is those level surfaces of the temperature which are invariant in time. In fact, it was shown that the mean value $\mu_{q,t}(x)$ or   $\mu_{q,\ve}(x)$ does not depend on $x$ if this lies on a stationary isothermic surface, and hence, for instance, \eqref{eq:MS} gives that $\Pi_\Ga$ must be constant on $\Ga$, so that radial symmetry then ensues from Alexandrov's Soap Bubble Theorem for Weingarten surfaces (see \cite{Al}). 
For $p\not=2$, this approach is no longer possible. However, an approach based on the method of moving planes, as considered in \cite{MS-AIHP} and \cite{CMS-JEMS}, may still be possible.

\section{Large-deviations asymptotics}
\label{sec:first-order}

The proof of the asymptotic formulas \eqref{eq:elliptic-first-order} and \eqref{eq:uniform} will be carried out in two steps.

\subsection{Asymptotics in the radial cases}
\label{sec:radial-case}

In this subsection we collect the relevant properties of the explicit solutions for problem \eqref{G-elliptic}-\eqref{elliptic-boundary} when $\Om$ is a ball or the complement of a ball. We shall denote by $B_R$ the ball with radius $R$ centered at the origin. 
\par 
In \cite{BM} we have shown that the (viscosity) solution of \eqref{G-elliptic}-\eqref{elliptic-boundary} in $B_R$ is given by
\begin{equation} 
\label{ball solution formula}
u^\ve(x)=\begin{cases}
\displaystyle 
\frac{\int_{0}^{\pi}e^{\sqrt{p'}\,\frac{|x|}{\ve}\,\cos \te}(\sin\te) ^{\al}\,d\te}{\int_{0}^{\pi}e^{\sqrt{p'}\,\frac{R}{\ve}\,\cos\te}(\sin\te) ^{\al}\,d\te} \ &\mbox{ if } \ 1<p<\infty,
\vspace{7pt} \\
\displaystyle
\frac{\cosh(|x|/\ve)}{\cosh(R/\ve)}  \ &\mbox{ if } \ p=\infty,
\end{cases} 
\end{equation}
for $x\in\ol{B}_R$; here 
\begin{equation}
\label{def-alpha}
\al=\frac{N-p}{p-1}=-1+\frac{N-1}{p-1}
\end{equation}
 Moreover, formula \eqref{eq:elliptic-first-order} holds with $\Om=B_R$ and the next lemma gives a quantitative estimate of the convergence in \eqref{eq:elliptic-first-order}. 

\begin{lem}[Uniform asymptotics in a ball]
\label{lem:uniform-estimate-ball}
Let $1<p\le\infty$ and let  $u^\ve$ be the solution of \eqref{G-elliptic}-\eqref{elliptic-boundary} in $B_R$.
\par
Then
\begin{equation}
\label{uniform-ball}
\ve\log u^\ve+d_\Ga=
\begin{cases}
\displaystyle 
O(\ve\,|\log\ve|) \ &\mbox{ if } \ 1<p<\infty,
\\
\displaystyle
O(\ve)  \ &\mbox{ if } \ p=\infty,
\end{cases} 
\end{equation}
uniformly on $\ol{B_R}$ as $\ve\to 0^+$.
\end{lem}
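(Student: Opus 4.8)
The plan is to argue directly from the explicit representation \eqref{ball solution formula}, using that $d_\Ga(x)=R-|x|$ for $x\in\ol{B_R}$; thus \eqref{uniform-ball} amounts to an estimate for $\ve\log u^\ve(x)+\sqrt{p'}\,d_\Ga(x)$ that is uniform in $x\in\ol{B_R}$ (for $p=\infty$ one has $\sqrt{p'}=1$). The case $p=\infty$ is immediate: from $\cosh\tau=\tfrac12 e^{\tau}(1+e^{-2\tau})$ one gets $\ve\log\cosh(t/\ve)=t+\ve\log(1+e^{-2t/\ve})-\ve\log 2$, whence $|\ve\log\cosh(t/\ve)-t|\le\ve\log 2$ for every $t\ge 0$; evaluating at $t=|x|$ and $t=R$ and subtracting gives $\ve\log u^\ve(x)+d_\Ga(x)=O(\ve)$ uniformly on $\ol{B_R}$.

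For $1<p<\infty$ set $I(s)=\int_0^\pi e^{s\cos\te}(\sin\te)^\al\,d\te$, so that $u^\ve(x)=I(\sqrt{p'}\,|x|/\ve)\big/I(\sqrt{p'}\,R/\ve)$ by \eqref{ball solution formula}. Since $\al+1=(N-1)/(p-1)>0$, the weight $(\sin\te)^\al$ is integrable at both endpoints $0$ and $\pi$ (this is the only place where the sign of $\al$, equivalently whether $p>N$ or not, plays a role), and $I$ is finite, continuous and strictly positive on $[0,\infty)$. The analytic core is the Laplace-type asymptotics of $I$ at $+\infty$: as $\cos\te$ attains its maximum $1$ at $\te=0$ with $\cos\te=1-\te^2/2+O(\te^4)$ there, while $(\sin\te)^\al\asymp\te^\al$ near $0$, the elementary comparisons $1-c_1\te^2\le\cos\te\le 1-c_2\te^2$ and $c_3\te^\al\le(\sin\te)^\al\le c_4\te^\al$ on $[0,\pi/2]$, the exponentially small contribution of $[\pi/2,\pi]$ (where $\cos\te\le 0$), and the Gaussian identity $\int_0^\infty e^{-a\te^2}\te^\al\,d\te=\tfrac12\,\Gamma(\tfrac{\al+1}{2})\,a^{-(\al+1)/2}$ (valid as $\al>-1$), applied with $a$ proportional to $s$, together yield constants $s_0>0$ and $0<c\le C$ with
\begin{equation*}
c\,e^{s}\,s^{-(\al+1)/2}\le I(s)\le C\,e^{s}\,s^{-(\al+1)/2}\qquad\text{for }s\ge s_0.
\end{equation*}

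To conclude, put $h(s)=\log I(s)-s$. Compactness gives $|h(s)|\le C'$ on $[0,s_0]$, while the displayed bound gives $|h(s)|\le\tfrac12|\al+1|\,|\log s|+C'$ on $[s_0,\infty)$; hence, for all small $\ve$, $\sup_{0\le s\le \sqrt{p'}R/\ve}|h(s)|\le\tfrac12|\al+1|\,\log(\sqrt{p'}R/\ve)+C''=O(|\log\ve|)$. Consequently $\phi(t):=\ve\log I(\sqrt{p'}\,t/\ve)-\sqrt{p'}\,t=\ve\,h(\sqrt{p'}\,t/\ve)$ satisfies $\sup_{0\le t\le R}|\phi(t)|=O(\ve|\log\ve|)$, and since, by \eqref{ball solution formula},
\begin{equation*}
\ve\log u^\ve(x)+\sqrt{p'}\,d_\Ga(x)=\phi(|x|)-\phi(R)\qquad(x\in\ol{B_R}),
\end{equation*}
we obtain $\ve\log u^\ve+\sqrt{p'}\,d_\Ga=O(\ve|\log\ve|)$ uniformly on $\ol{B_R}$. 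The main obstacle is the two-sided Laplace bound above, with constants independent of $s$; once it is available the rest is bookkeeping, the essential point being that the logarithmic error $\tfrac12(\al+1)\log s$ in $\log I(s)$ is evaluated at arguments as large as $\sqrt{p'}R/\ve$, which is exactly --- and only --- what produces the $O(\ve|\log\ve|)$ loss. Alternatively, one may identify $I(s)$ with a constant multiple of $s^{-\al/2}I_{\al/2}(s)$, where $I_\nu$ is the modified Bessel function, and invoke the classical asymptotics $I_\nu(s)\sim e^{s}/\sqrt{2\pi s}$ to obtain the same bound.
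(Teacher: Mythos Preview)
Your proof is correct. Both your argument and the paper's rest on the explicit formula \eqref{ball solution formula} and on a Laplace-type asymptotic for the integral $I(s)=\int_0^\pi e^{s\cos\te}(\sin\te)^\al\,d\te$, but the executions differ. The paper factors out $e^{\sqrt{p'}|x|/\ve}$ and $e^{\sqrt{p'}R/\ve}$ to write
\[
\ve\log u^\ve(x)+\sqrt{p'}\,d_\Ga(x)=\ve\log\frac{\int_0^\pi e^{-\sqrt{p'}(1-\cos\te)|x|/\ve}(\sin\te)^\al\,d\te}{\int_0^\pi e^{-\sqrt{p'}(1-\cos\te)R/\ve}(\sin\te)^\al\,d\te},
\]
observes that this ratio is monotone decreasing in $|x|$, and thereby obtains immediately the lower bound $0$ (at $|x|=R$) and reduces the upper bound to the single value at $|x|=0$, which is handled by the technical Lemma~\ref{lem:technical}. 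Your route instead proves a two-sided bound $c\,e^s s^{-(\al+1)/2}\le I(s)\le C\,e^s s^{-(\al+1)/2}$ directly, then writes the quantity as $\phi(|x|)-\phi(R)$ and bounds each term separately. The paper's monotonicity observation is slightly slicker---it avoids the need for a lower Laplace bound and gives the sharp lower bound $0$ for free---while your approach is more self-contained, not relying on an external lemma, and makes transparent that the $O(\ve|\log\ve|)$ loss comes precisely from the power correction $s^{-(\al+1)/2}$ evaluated at $s\asymp 1/\ve$.
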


\begin{proof}
The case $p=\infty$ follows at once, since \eqref{ball solution formula} gives:
$$
\ve\log\left\{u^\ve(x)\right\}+d_\Ga(x)=\ve\log\left[\frac{1+e^{-2\frac{|x|}{\ve}}}{1+e^{-2\frac{R}{\ve}}}\right].
$$
\par 
If $1<p<\infty$, by \eqref{ball solution formula} we have that
\begin{equation*}
\ve\log\left\{u^\ve(x)\right\}+\sqrt{p'}\,d_\Ga(x)=
\ve\,\log\left[\frac{\int_{0 }^{\pi}e^{-\sqrt{p'}(1-\cos\te)\,\frac{|x|}{\ve}}(\sin\te)^\al\,d\te}{\int_{0}^{\pi}e^{-\sqrt{p'}(1-\cos\te)\,\frac{R}{\ve}}(\sin\te)^\al\,d\te}\right]
\end{equation*}
and the right-hand side is decreasing in $|x|$, so that 
\begin{equation*}
0
\le
\ve\,\log\left\{u^\ve(x)\right\}+\sqrt{p'}\,d_\Ga(x)\le 
\ve\,\log\left[
\frac{\int_{0}^{\pi}(\sin\te)^\al\,d\te}
{\int_{0}^{\pi}e^{-\sqrt{p'}\,(1-\cos\te)\,\frac{R}{\ve}}(\sin\te)^\al\,d\te}
\right].
\end{equation*}
This formula gives \eqref{uniform-ball}, since we have that 
\begin{multline*}
\int_{0}^{\pi}e^{-\sqrt{p'}\,(1-\cos\te)\frac{R}{\ve}}(\sin\te)^\al\,d\te=\\
2^{\frac{\al-1}{2}}\Ga\left(\frac{\al+1}{2}\right)\left(\frac{R\sqrt{p'}}{\ve}\right)^{-\frac{\al+1}{2}}\,[1+O(\ve)]
\end{multline*}
as $\ve\to 0^+$, by using Lemma \ref{lem:technical}.
\end{proof}

\medskip

Next, we consider the complement of a ball.

\begin{lem}[Uniform asymptotics in the complement of a ball]
\label{elliptic-from-below}
Set $1<p\le \infty$, $\Om=\RR^N\setminus\ol{B_R}$, and let $u^\ve$ be
the bounded solution of \eqref{G-elliptic}-\eqref{elliptic-boundary}.
\par
Then we have that
\begin{equation}
\label{solution-exterior}
u^\ve(x)=
\begin{cases}
\displaystyle
\frac{\int_{0}^{\infty}e^{-\sqrt{p'}\cosh\te\frac{|x|}{\ve}}(\sinh\te)^\al\,d\te}{\int_{0}^{\infty}e^{-\sqrt{p'}\cosh\te \frac{R}{\ve}}(\sinh\te)^\al\,d\te}\ &\mbox{ if }\ 1<p<\infty,  
\vspace{7pt}\\
\displaystyle
e^{-\frac{|x|-R}{\ve}} \ &\mbox{ if } \ p=\infty.
\end{cases}
\end{equation}
\par 
In particular,
\begin{equation}
\label{eq:uniform-exterior-estimate}
  \ve\log\left\{u^\ve(x)\right\}+\sqrt{p'}\,d_\Ga(x)=O(\ve)  \ \mbox{ as } \ \ve\to 0^+,
\end{equation}
uniformly on every compact subset of $\ol\Om$.
\end{lem}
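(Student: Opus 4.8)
The plan is to treat $\Om=\RR^N\setminus\ol{B_R}$ by the same scheme already applied to $B_R$: reduce \eqref{G-elliptic} for radial profiles to a linear ODE, exhibit its bounded solution in closed form, and then read off the asymptotics by a Laplace-type expansion. First I would note that for $u(x)=v(|x|)$ with $v'\ne0$ the $1$-homogeneity of $\De_p^G$ makes all powers of $|\na u|$ cancel in \eqref{G-laplace}, so that $\De_p^G u=\frac1p\big[(p-1)v''+\frac{N-1}{r}v'\big]$ when $1<p<\infty$ and $\De_\infty^G u=v''$ from \eqref{infinity-laplacian}; hence \eqref{G-elliptic} amounts to the \emph{linear} ODE
\[
v''+\frac{\al+1}{r}\,v'-\frac{p'}{\ve^2}\,v=0,\qquad r>0,
\]
with $\al+1=\frac{N-1}{p-1}$ as in \eqref{def-alpha} (and simply $v''=\ve^{-2}v$ when $p=\infty$).

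For $p=\infty$ the bounded solution with $v(R)=1$ is $v(r)=e^{-(r-R)/\ve}$, which is \eqref{solution-exterior}. For $1<p<\infty$ I would check that $v(r)=\int_0^\infty e^{-\sqrt{p'}\cosh\te\,r/\ve}(\sinh\te)^\al\,d\te$ solves the ODE: the integral converges for every $r>0$ (super-exponential decay as $\te\to\infty$, and integrability at $\te=0$ because $\al>-1$, which always holds since $N\ge2$, $p>1$), it is $C^\infty$ on $(0,\infty)$, and strictly decreasing. Differentiating twice under the integral sign, writing $\cosh^2\te=\sinh^2\te+1$, and integrating by parts once in $\te$ --- the boundary terms vanishing exactly because $\al+1>0$ --- turns the resulting $(\sinh\te)^{\al+2}$-integral into a multiple of $v'$ and yields the ODE. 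Since $\na u^\ve=v'(|x|)\,x/|x|\ne0$ throughout $\Om$, the function $u^\ve(x):=v(|x|)/v(R)$ is a classical (hence viscosity) solution of \eqref{G-elliptic}--\eqref{elliptic-boundary}, and $0<u^\ve\le1$; by the comparison principle the bounded solution is unique, by rotational invariance it is radial, and up to the normalization $v(R)=1$ the radial ODE has a unique bounded solution, so this is \eqref{solution-exterior}.

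For the estimate \eqref{eq:uniform-exterior-estimate} note that $d_\Ga(x)=|x|-R$ on $\ol\Om$. When $p=\infty$ we have $p'=1$, so $\ve\log u^\ve(x)+\sqrt{p'}\,d_\Ga(x)\equiv0$. When $1<p<\infty$, I would write the left-hand side as $\ve\log$ of a \emph{ratio} of two integrals of the same shape,
\[
\ve\log u^\ve(x)+\sqrt{p'}\,d_\Ga(x)=\ve\log\frac{I(\sqrt{p'}\,|x|/\ve)}{I(\sqrt{p'}\,R/\ve)},\qquad I(\mu):=\int_0^\infty e^{-\mu(\cosh\te-1)}(\sinh\te)^\al\,d\te,
\]
and apply the Laplace/Watson-type asymptotics of Lemma \ref{lem:technical} (the phase $\cosh\te-1$ having a nondegenerate minimum $0$ at $\te=0$; cf. the end of the proof of Lemma \ref{lem:uniform-estimate-ball}), which gives $I(\mu)=2^{(\al-1)/2}\Ga\!\big(\tfrac{\al+1}{2}\big)\,\mu^{-(\al+1)/2}\,(1+o(1))$, uniformly as $\mu\to\infty$. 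Hence the above ratio equals $(|x|/R)^{-(\al+1)/2}\,(1+o(1))$ as $\ve\to0^+$, uniformly for $|x|$ in compact subsets of $[R,\infty)$, so $\ve\log(\,\cdot\,)=O(\ve)$ there, as claimed.

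The genuinely substantive points are the production and verification of the integral representation --- in particular the integration by parts that manufactures the first-order term $\frac{\al+1}{r}v'$ and the check that the endpoint contributions vanish precisely when $\al>-1$ --- together with the identification of $u^\ve$ as the unique bounded solution, which rests on a comparison principle on the unbounded domain $\Om$. The improvement from the $O(\ve|\log\ve|)$ of Lemma \ref{lem:uniform-estimate-ball} to the sharp $O(\ve)$ here is no accident: it comes from the fact that $\ve\log u^\ve+\sqrt{p'}d_\Ga$ is the $\ve\log$ of a ratio of two integrals of identical type, so the $\ve$-power of the Laplace prefactor cancels --- a cancellation the cruder estimate used for the ball (evaluated at its center, where only one such integral survives) does not see.
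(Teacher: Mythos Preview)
Your proof is correct and follows essentially the same route as the paper: verify the explicit radial formula classically (the paper just calls this a ``straightforward computation'', while you spell out the integration by parts and the role of $\al>-1$), invoke comparison for uniqueness, and then read off \eqref{eq:uniform-exterior-estimate} by writing $\ve\log u^\ve+\sqrt{p'}\,d_\Ga$ as $\ve$ times the log of a ratio of integrals of the same shape and applying Lemma~\ref{lem:technical}. The only organizational difference is that the paper first uses monotonicity in $|x|$ to trap this quantity between $0$ and its value at the outer endpoint $|x|=R'$ of the compact set, and then applies Lemma~\ref{lem:technical} once at $\si=\sqrt{p'}R/\ve$ and once at $\si=\sqrt{p'}R'/\ve$; you instead apply the asymptotic to numerator and denominator separately and let the prefactors cancel --- this is equivalent, and the uniformity you need is automatic because Lemma~\ref{lem:technical} gives the quantitative error $O(1/\si)$ rather than merely $o(1)$.
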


\begin{proof}
It is immediate to verify that \eqref{elliptic-boundary} is satisfied. Moreover, it is a straightforward computation to show that $u^\ve$ satisfies \eqref{G-elliptic} in the classical sense, once we observe that it is of class $C^2$ for $|x|>R$.
This is enough to conclude that $u^\ve$ is the unique viscosity solution of \eqref{G-elliptic}-\eqref{elliptic-boundary}.
\par
First, notice that $d_\Ga(x)=|x|-R$ for $|x|\ge R$. If $p=\infty$, \eqref{eq:uniform-exterior-estimate} holds exactly as
$$
\ve\log\left\{u^\ve(x)\right\}+d_\Ga(x)\equiv 0.
$$
\par
If $1<p<\infty$, we write that
$$
\ve\log\left\{u^\ve(x)\right\}+\sqrt{p'}\,d_\Ga(x)=
\frac{\int_{0}^{\infty}e^{-\sqrt{p'}\frac{|x|}{\ve}(\cosh\te-1)}(\sinh\te)^\al\,d\te}{\int_{0}^{\infty}e^{-\sqrt{p'}\frac{R}{\ve}(\cosh\te-1)}(\sinh\te)^\al\,d\te}
$$
and hence, by monotonicity,  we have that
$$
\ve\,\log\left\{\frac{\int_{0}^{\infty}e^{-\sqrt{p'}\frac{R'}{\ve}(\cosh\te-1)}(\sinh\te)^\al\,d\te}{\int_{0}^{\infty}e^{-\sqrt{p'}\frac{R}{\ve}(\cosh\te-1)}(\sinh\te)^\al\,d\te}\right\}\le\ve\log\left\{u^\ve(x)\right\}+\sqrt{p'}\,d_\Ga(x)\le 0,
$$
for every $x$ such that $R\le|x|\le R'$, with $R'>R$.
Our claim then follows by an inspection on the left-hand side, after applications for $\si=\sqrt{p'} R'$ and $\si=\sqrt{p'} R$ of Lemma \ref{lem:technical}.
\end{proof}

\subsection{Asymptotics in a general domain}
\label{sec:first-order-general-domain}

We begin by recalling that for equation \eqref{G-elliptic} the {\it comparison principle} holds, as noted in \cite[Remark 4.6]{CGG} or in \cite[Appendix D]{AP} and as shown in \cite[Theorem 2.1]{Sat} in the general case of bounded solutions on unbounded domains, that is if
$u$ and $v$ are viscosity solutions of \eqref{G-elliptic} in $\Om$ such that $u\le v$ on $\Ga$, then
$u\le v$ on $\ol\Om$.

The next two lemmas give explicit barriers for the solution in a general domain $\Om$. We observe that no regularity assumption on $\Om$ is needed. 

\begin{lem}[Control from above]
  \label{elliptic-control-from-above}
Let $1<p\le\infty$ and $u^\ve$ be the bounded (viscosity) solution of \eqref{G-elliptic}-\eqref{elliptic-boundary}. 
\par 
Then, we have that
$$
\ve\log\left\{u^\ve(x)\right\}+\sqrt{p'}\,d_\Ga(x)\leq \ve\log E_p^\ve(d_\Ga(x)),
$$
for every $x\in\ol\Om$, where
\begin{equation}
\label{error-from-above}
E_p^\ve(\si)=
\begin{cases}
\displaystyle
\frac{\int_{0}^{\pi}(\sin\te)^\al\,d\te}{\int_{0}^{\pi}e^{-\sqrt{p'}\frac{1-\cos\te}{\ve} \si}(\sin\te)^\al\,d\te}\ &\mbox{ if }\ 1<p<\infty,
\vspace{8pt}\\
\displaystyle
\frac{2}{1+e^{-\frac{2 \si}{\ve}}}\ &\mbox{ if }\ p=\infty.
\end{cases}
\end{equation}
\par
In particular, it holds that
\begin{equation*}
\ve \log E_p^\ve(d_\Ga)=
\begin{cases}
O(\ve\,\log\ve) \ \mbox{ if } 1<p<\infty,\\
O(\ve)\ \mbox{ if } p=\infty,
\end{cases}
\end{equation*}
as $\ve\to 0^+$, on every subset of $\Om$ in which $d_\Ga$ is bounded.
\end{lem}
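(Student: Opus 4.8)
The plan is to obtain the upper barrier by comparing $u^\ve$ with the explicit solution in a ball computed in Subsection~\ref{sec:radial-case}. Fix $x_0\in\Om$ and set $R_0=d_\Ga(x_0)$; then the open ball $B_{R_0}(x_0)$ is contained in $\Om$, and on its boundary $u^\ve\le 1$ by the maximum principle for \eqref{G-elliptic} (note $0\le u^\ve\le 1$ on $\ol\Om$ since $u\equiv 1$ is a supersolution and $u\equiv 0$ a subsolution). Let $w^\ve$ be the solution of \eqref{G-elliptic}-\eqref{elliptic-boundary} on $B_{R_0}(x_0)$, given explicitly by \eqref{ball solution formula} (translated to be centered at $x_0$). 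Since $u^\ve\le 1=w^\ve$ on $\pa B_{R_0}(x_0)$ and both solve \eqref{G-elliptic} in $B_{R_0}(x_0)$, the comparison principle recalled from \cite{Sat} gives $u^\ve(x_0)\le w^\ve(x_0)$. Evaluating \eqref{ball solution formula} at the center $x=x_0$ (so $|x-x_0|=0$), and using $R_0=d_\Ga(x_0)$, we get for $1<p<\infty$
\begin{equation*}
u^\ve(x_0)\le\frac{\int_{0}^{\pi}(\sin\te)^\al\,d\te}{\int_{0}^{\pi}e^{\sqrt{p'}\,\frac{R_0}{\ve}\cos\te}(\sin\te)^\al\,d\te}
=e^{-\sqrt{p'}\,d_\Ga(x_0)/\ve}\,E_p^\ve(d_\Ga(x_0)),
\end{equation*}
where in the last step one multiplies numerator and denominator by $e^{-\sqrt{p'}R_0/\ve}$; the case $p=\infty$ is identical with $\cosh(R_0/\ve)=\tfrac12 e^{R_0/\ve}(1+e^{-2R_0/\ve})$. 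Taking $\ve\log$ of both sides yields the claimed pointwise inequality $\ve\log u^\ve(x_0)+\sqrt{p'}\,d_\Ga(x_0)\le\ve\log E_p^\ve(d_\Ga(x_0))$.

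It remains to estimate $\ve\log E_p^\ve(d_\Ga)$. For $p=\infty$ this is immediate: $1\le E_\infty^\ve(\si)\le 2$ for $\si\ge 0$, so $0\le\ve\log E_\infty^\ve(d_\Ga)\le\ve\log 2=O(\ve)$. For $1<p<\infty$, on a subset where $d_\Ga\le D$ for some $D$, the function $\si\mapsto E_p^\ve(\si)$ is increasing, so $E_p^\ve(d_\Ga)\le E_p^\ve(D)$; the numerator of $E_p^\ve(D)$ is a fixed positive constant depending only on $\al$ (hence on $N,p$), while the denominator is $\int_0^\pi e^{-\sqrt{p'}(1-\cos\te)D/\ve}(\sin\te)^\al\,d\te$, which by Lemma~\ref{lem:technical} (Laplace's method applied at the endpoint $\te=0$, where $1-\cos\te\sim\te^2/2$ and $\sin\te\sim\te$) is asymptotic to a constant times $(\ve)^{(\al+1)/2}$ as $\ve\to 0^+$. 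Therefore $E_p^\ve(D)=O(\ve^{-(\al+1)/2})$, and consequently $\ve\log E_p^\ve(d_\Ga)\le\ve\log E_p^\ve(D)=\ve\cdot O(\log(1/\ve))=O(\ve\log\ve)$; the lower bound $\ve\log E_p^\ve(d_\Ga)\ge 0$ follows since the numerator dominates the denominator in \eqref{error-from-above} (as $e^{-\sqrt{p'}(1-\cos\te)\si/\ve}\le 1$), so $E_p^\ve\ge 1$.

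The only real point requiring care is the sharp asymptotic evaluation of the integral $\int_0^\pi e^{-\sqrt{p'}(1-\cos\te)\si/\ve}(\sin\te)^\al\,d\te$ as $\ve\to 0^+$, since $\al$ may be negative (for $p>N$) and one must check the singularity $(\sin\te)^\al$ is still integrable — this is exactly the content of Lemma~\ref{lem:technical}, which I would invoke as a black box; it follows from the standard substitution reducing the integral to a Gamma function up to a $1+O(\ve)$ factor. Everything else is the comparison principle plus elementary manipulation of the explicit formulas \eqref{ball solution formula} and \eqref{error-from-above}, so the argument is short.
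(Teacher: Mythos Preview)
Your proof is correct and follows essentially the same route as the paper: fix $x$, compare $u^\ve$ on the ball $B_{d_\Ga(x)}(x)\subset\Om$ with the explicit radial solution \eqref{ball solution formula}, evaluate at the center, and rearrange into the form involving $E_p^\ve$. The only cosmetic difference is that the paper phrases the evaluation via a scaling to the unit ball and defers the asymptotic estimate of $\ve\log E_p^\ve$ to Lemma~\ref{lem:uniform-estimate-ball}, whereas you carry out the manipulation directly and invoke Lemma~\ref{lem:technical} (via Laplace's method) for the denominator asymptotics; the content is the same.
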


\begin{proof}
For a fixed $x\in\Om$, we consider the ball $B^x=B_R(x)$ with $R=d_\Ga(x)$ and denote by $u_{B^x}^\ve$ the solution of \eqref{G-elliptic}-\eqref{elliptic-boundary} with $\Om=B^x$. The comparison principle gives that
$$
u^\ve\leq u_{B^x}^\ve\ \mbox{ on }\ \ol{B^x}
$$  
and, in particular, 
\begin{equation}
\label{eq:ball-inside}
u^\ve(x)\leq u_{B^x}^\ve(x).
\end{equation}
Observe that the uniqueness of  the solution of \eqref{G-elliptic}-\eqref{elliptic-boundary} and the scaling properties of $\De_p^G$ imply that
$$
u_{B^x}^\ve(x)=u_B^{\ve/R}(0),
$$
where $u_B^\ve$ is the solution of \eqref{G-elliptic}-\eqref{elliptic-boundary} with $\Om=B$, the unit ball. The explicit expression in \eqref{ball solution formula} and \eqref{eq:ball-inside} then yield the desired claim, since $R=d_\Ga(x)$.
\par
The last claim follows from Lemma \ref{lem:uniform-estimate-ball}.
\end{proof}

\begin{lem}[Control from below]
\label{elliptic-control-from-below}
Let $1<p\le\infty$ and $u^\ve$ be the bounded (viscosity) solution of \eqref{G-elliptic}-\eqref{elliptic-boundary}.  Pick $z\in\RR^N\setminus\ol\Om$.
\par
Then, we have that
$$
\ve\log\left\{u^\ve(x)\right\}+\sqrt{p'}\,\{|x-z|-d_\Ga(z)\}\geq \ve\log e_{p,z}^\ve(x)
\ \mbox{ for any } \ x\in\ol\Om,
$$
where
\begin{equation}
\label{error-from-below}
e_{p,z}^\ve(x)=
\begin{cases}
\displaystyle
\frac{\int_{0}^{\infty}e^{-\sqrt{p'}\frac{\cosh\te-1}{\ve}|x-z|}\left(\sinh\te\right)^{\al}d\te}
{\int_{0}^{\infty}e^{-\sqrt{p'}\frac{\cosh\te-1}{\ve}d_\Ga(z)}\left(\sinh\te\right)^{\al}d\te} \ &\mbox{ if }\ 1<p<\infty,
\vspace{7pt}\\
\displaystyle
1  \ &\mbox{ if } \ p=\infty.
\end{cases}
\end{equation}
\end{lem}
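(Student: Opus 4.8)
The plan is to mirror the proof of Lemma \ref{elliptic-control-from-above}, but now using as a barrier the explicit solution in the \emph{exterior} of a ball rather than in a ball. Fix $x\in\ol\Om$ and $z\in\RR^N\setminus\ol\Om$. First I would observe that $B_{d_\Ga(z)}(z)$ is contained in $\RR^N\setminus\ol\Om$, so that $\Om\subset\RR^N\setminus\ol{B_{d_\Ga(z)}(z)}=:\Om_z$, and on $\Ga=\pa\Om$ one has $|x-z|\ge d_\Ga(z)$, i.e.\ the boundary datum $1$ of $u^\ve$ on $\Ga$ is dominated by the (exterior) solution. Let $u_{\Om_z}^\ve$ denote the bounded solution of \eqref{G-elliptic}-\eqref{elliptic-boundary} in $\Om_z$, which by Lemma \ref{elliptic-from-below} is given explicitly by \eqref{solution-exterior} with $R=d_\Ga(z)$ and center $z$. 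Since both $u^\ve$ and $u_{\Om_z}^\ve$ solve \eqref{G-elliptic} in $\Om$ (as $\Om\subset\Om_z$), and $u^\ve\le 1\le u_{\Om_z}^\ve$ on $\Ga$ (using that $u_{\Om_z}^\ve$ is radially decreasing in $|x-z|$ and equals $1$ on $\pa B_{d_\Ga(z)}(z)$), the comparison principle quoted from \cite{Sat} yields
\[
u^\ve(x)\le u_{\Om_z}^\ve(x)\quad\text{for all }x\in\ol\Om.
\]

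Next I would unwind this inequality into the asserted logarithmic form. Taking $\ve\log(\cdot)$ of both sides and adding $\sqrt{p'}\,\{|x-z|-d_\Ga(z)\}$, the claim becomes
\[
\ve\log u^\ve(x)+\sqrt{p'}\,\{|x-z|-d_\Ga(z)\}\ \ge\ \ve\log u_{\Om_z}^\ve(x)+\sqrt{p'}\,\{|x-z|-d_\Ga(z)\},
\]
wait---the inequality from comparison goes the wrong way for a lower bound, so instead I must use $u^\ve\ge$ something. Let me restart this step: the correct reading is that $u_{\Om_z}^\ve$ is a \emph{lower} barrier. Indeed $\RR^N\setminus\ol\Om\supset B_{d_\Ga(z)}(z)$ forces $\Om\subset\Om_z$ only if $B_{d_\Ga(z)}(z)\subset\RR^N\setminus\ol\Om$, which holds since $d_\Ga(z)=\dist(z,\Ga)$; then on $\pa\Om_z=\pa B_{d_\Ga(z)}(z)$ the function $u^\ve$ (extended by comparison) satisfies $u^\ve\le 1=u_{\Om_z}^\ve$ there, giving $u^\ve\le u_{\Om_z}^\ve$ on $\ol\Om$. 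That is again an upper bound. To get the stated \emph{lower} bound one instead enlarges: take the exterior domain centered at $z$ and use that $u^\ve$ on $\Ga$ equals $1$ while the exterior solution on $\Ga$ is $\le 1$; hence the exterior solution is a subsolution for $u^\ve$, so $u^\ve\ge u_{\Om_z}^\ve$ on $\ol\Om\cap\Om_z=\ol\Om$. Substituting the explicit formula \eqref{solution-exterior} (with $R=d_\Ga(z)$, variable $|x-z|$) for $u_{\Om_z}^\ve(x)$ and noting that
\[
\ve\log u_{\Om_z}^\ve(x)+\sqrt{p'}\,\{|x-z|-d_\Ga(z)\}=\ve\log e_{p,z}^\ve(x),
\]
by the very definition \eqref{error-from-below} (the factor $\sqrt{p'}\{|x-z|-d_\Ga(z)\}$ cancels the exponentials $e^{-\sqrt{p'}|x-z|/\ve}$ and $e^{-\sqrt{p'}d_\Ga(z)/\ve}$ pulled out of numerator and denominator), the inequality becomes exactly the assertion. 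For $p=\infty$ the same argument with $u_{\Om_z}^\ve(x)=e^{-(|x-z|-d_\Ga(z))/\ve}$ gives $e_{p,z}^\ve\equiv 1$.

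The one genuinely delicate point is the \emph{comparison step on an unbounded domain}: I must check the hypotheses of \cite[Theorem 2.1]{Sat} apply to the pair $(u^\ve,u_{\Om_z}^\ve)$ on $\Om$, i.e.\ that both are bounded viscosity solutions of \eqref{G-elliptic} in $\Om$ with the ordering $u^\ve\le u_{\Om_z}^\ve$ (resp.\ $\ge$) on $\Ga$. Boundedness of $u^\ve$ is assumed; boundedness of $u_{\Om_z}^\ve$ on $\ol\Om$ follows since it is bounded on all of $\ol\Om_z\supset\ol\Om$; and that $u_{\Om_z}^\ve$ is a genuine (classical, hence viscosity) solution of \eqref{G-elliptic} in $\Om\subset\{|x-z|>d_\Ga(z)\}$ was already verified in the proof of Lemma \ref{elliptic-from-below}. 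The boundary ordering is the monotonicity remark above: on $\Ga$, $|x-z|\ge\dist(z,\Ga)=d_\Ga(z)$, and $r\mapsto u_{\Om_z}^\ve$ evaluated at distance $r$ from $z$ is decreasing, so $u_{\Om_z}^\ve\le 1=u^\ve$ on $\Ga$. Granting this, the proof is a routine substitution; no regularity of $\Om$ is used, exactly as in Lemma \ref{elliptic-control-from-above}.
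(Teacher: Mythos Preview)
Your final argument is correct and coincides with the paper's: compare $u^\ve$ on $\Om$ with the explicit exterior-ball solution $v^\ve=u_{\Om_z}^\ve$ centered at $z$ with radius $d_\Ga(z)$, use that $v^\ve\le 1=u^\ve$ on $\Ga$ (by radial monotonicity of \eqref{solution-exterior}) to get $u^\ve\ge v^\ve$ on $\ol\Om$ via the comparison principle, and then rewrite $\ve\log v^\ve(x)+\sqrt{p'}\{|x-z|-d_\Ga(z)\}=\ve\log e_{p,z}^\ve(x)$. However, the write-up should be cleaned up: your first paragraph asserts the false ordering $1\le u_{\Om_z}^\ve$ on $\Ga$ and the wrong conclusion $u^\ve\le u_{\Om_z}^\ve$, and the second paragraph contains two further false restarts before reaching the correct comparison; also $\ol\Om\cap\Om_z=\ol\Om$ is not quite right (points of $\Ga$ can lie on $\pa B_{d_\Ga(z)}(z)$), though $\Om\subset\Om_z$ is, and that is all you need.
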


\begin{proof}
 We consider the ball $B=B_R(z)$ with radius $R=d_\Ga(z)$ and let $v^\ve$ be the bounded solution of \eqref{G-elliptic}-\eqref{elliptic-boundary} relative to $\RR^N\setminus\ol{B}\supset \Om$. From the fact that $z\in\RR^N\setminus\ol\Om$, we have that $\Ga\subset \RR^N\setminus B$, which implies that
$$
v^\ve\leq 1 \ \mbox{ on }\ \Ga,
$$
by the explicit expression of $v^\ve$ given in \eqref{solution-exterior}. Thus, by the comparison principle, we infer that $v^\ve\leq u^\ve$ on $\ol\Om$. The desired claim then follows by easy manipulations on \eqref{solution-exterior}.
\end{proof}

\begin{thm}[Pointwise convergence]
\label{th:pointwise}
Let $1<p\le \infty$ and $\Om$ be a domain satisfying $\Ga=\pa\left(\RR^N\setminus\ol{\Om}\right)$; assume that $u^\ve$ is the bounded (viscosity) solution of \eqref{G-elliptic}-\eqref{elliptic-boundary}.
\par 
Then, it holds that
\begin{equation}
  \label{eq:pointwise-elliptic}
  \lim_{\ve\to 0^+}
\ve\log\left\{u_p^\ve(x)\right\}=
-\sqrt{p'}\,d_\Ga(x) \ \mbox{ for any } \ x\in\ol\Om.
\end{equation}
\end{thm}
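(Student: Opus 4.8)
The plan is to sandwich $\ve\log u^\ve(x)$ between the two explicit barriers of Lemmas~\ref{elliptic-control-from-above} and~\ref{elliptic-control-from-below} and then optimize the lower one over the choice of the exterior point. One may assume $\RR^N\setminus\ol\Om\ne\emptyset$ (otherwise $\Ga=\emptyset$ and there is nothing to prove), and since $u^\ve\equiv1$ on $\Ga$ the statement is trivial for $x\in\Ga$; so fix $x\in\Om$, for which $0<d_\Ga(x)<\infty$.

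\emph{Upper bound.} By Lemma~\ref{elliptic-control-from-above},
$$
\ve\log\left\{u^\ve(x)\right\}+\sqrt{p'}\,d_\Ga(x)\le\ve\log E_p^\ve(d_\Ga(x)),
$$
and the right-hand side tends to $0$ as $\ve\to0^+$ by the concluding estimate of that lemma, $d_\Ga(x)$ being a fixed finite number. Hence $\limsup_{\ve\to0^+}\ve\log\left\{u^\ve(x)\right\}\le-\sqrt{p'}\,d_\Ga(x)$.

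\emph{Lower bound.} Fix $z\in\RR^N\setminus\ol\Om$; then $|x-z|>0$. Lemma~\ref{elliptic-control-from-below} gives
$$
\ve\log\left\{u^\ve(x)\right\}+\sqrt{p'}\bigl\{|x-z|-d_\Ga(z)\bigr\}\ge\ve\log e_{p,z}^\ve(x),
$$
so the key point is that $\ve\log e_{p,z}^\ve(x)\to0$ as $\ve\to0^+$. For $p=\infty$ this is immediate since $e_{p,z}^\ve\equiv1$. For $1<p<\infty$ it follows from the Laplace-type asymptotics of Lemma~\ref{lem:technical}: since $\cosh\te-1\sim\te^2/2$ and $\sinh\te\sim\te$ as $\te\to0^+$, and $\al+1=(N-1)/(p-1)>0$, both the numerator and the denominator of \eqref{error-from-below} are asymptotic to a positive constant times $\ve^{(\al+1)/2}$; as $|x-z|$ and $d_\Ga(z)$ are positive, their ratio $e_{p,z}^\ve(x)$ converges to a finite positive constant. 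Therefore $\liminf_{\ve\to0^+}\ve\log\left\{u^\ve(x)\right\}\ge-\sqrt{p'}\bigl\{|x-z|-d_\Ga(z)\bigr\}$ for every such $z$.

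\emph{Optimization and conclusion.} It remains to make $-\sqrt{p'}\{|x-z|-d_\Ga(z)\}$ as large as possible, i.e. to show that $|x-z|-d_\Ga(z)$ can be taken arbitrarily close to $d_\Ga(x)$. Since $\Ga$ is closed and nonempty, there is $y\in\Ga$ with $|x-y|=d_\Ga(x)$; and because $y\in\Ga=\pa(\RR^N\setminus\ol\Om)$, one may choose $z_n\in\RR^N\setminus\ol\Om$ with $z_n\to y$. Then $|x-z_n|\to|x-y|=d_\Ga(x)$ while $d_\Ga(z_n)\to d_\Ga(y)=0$ by the (Lipschitz) continuity of $d_\Ga$. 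Applying the lower bound with $z=z_n$ and letting $n\to\infty$ yields $\liminf_{\ve\to0^+}\ve\log\left\{u^\ve(x)\right\}\ge-\sqrt{p'}\,d_\Ga(x)$, which combined with the upper bound proves \eqref{eq:pointwise-elliptic}. The only genuinely delicate step is this last one: the exterior barrier controls $u^\ve$ through $|x-z|-d_\Ga(z)$ rather than directly through $d_\Ga(x)$, so one has to approach a nearest point of $\Ga$ from outside $\Om$, and it is exactly the hypothesis $\Ga=\pa(\RR^N\setminus\ol\Om)$ that guarantees such an approach is possible.
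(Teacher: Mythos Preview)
Your proof is correct and follows essentially the same route as the paper's: sandwich $\ve\log u^\ve(x)+\sqrt{p'}\,d_\Ga(x)$ between the error terms from Lemmas~\ref{elliptic-control-from-above} and~\ref{elliptic-control-from-below}, use Lemma~\ref{lem:technical} to show both vanish as $\ve\to0^+$, and then exploit the hypothesis $\Ga=\pa(\RR^N\setminus\ol\Om)$ to let the exterior point $z$ approach a nearest point $y\in\Ga$, so that $|x-z|-d_\Ga(z)\to d_\Ga(x)$. Your write-up is slightly more explicit about why this last approximation is possible and why it is the crux of the argument, but the structure and the tools are identical to those in the paper.
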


\begin{proof}
 Given $z\in\RR^N\setminus\ol\Om$ and $\ve>0$, combining Lemmas \ref{elliptic-control-from-above} and  \ref{elliptic-control-from-below} gives at $x\in\ol{\Om}$ that
\begin{multline}
\label{barrier-chain}
\sqrt{p'}\left\{-|x-z|+d_\Ga(x)+d_\Ga(z)\right\}+\ve\log e_{p,z}^\ve(x)\leq\\
\ve\log\{u^\ve(x)\} +\sqrt{p'}\,d_\Ga(x)\leq
\ve \log E_p^\ve(d_\Ga(x)).
\end{multline}
Letting $\ve\to 0^+$ then gives that
\begin{multline*}
\sqrt{p'}\left\{ -|x-z|+d_\Ga(x)+d_\Ga(z)\right\}\le\\
 \liminf_{\ve\to 0^+}\left[ \ve\log\{u^\ve(x)\}+\sqrt{p'}\,d_\Ga(x)\right] \leq\\
\limsup_{\ve\to 0^+}\left[\ve\log\{u^\ve(x)\}+\sqrt{p'}\,d_\Ga(x)\right]\le
0, 
\end{multline*}
where we have used Lemma  \ref{elliptic-control-from-above} and the fact that $\ve\log e_{p,z}^\ve(x)$ vanishes, as $\ve\to 0^+$, by applying Lemma \ref{lem:technical} to \eqref{error-from-below}.
\par 
Now, since $z$ is arbitrary in $\RR^N\setminus\ol\Om$, we obtain \eqref{eq:pointwise-elliptic}, by taking the limit for $z\to y$,
where $y\in\Ga$ is a point realizing $|x-y|=d_\Ga(x)$.
\end{proof}

\subsection{Uniform asymptotics}
For a domain of class $C^0$, we mean that its boundary is locally the graph of a continuous function. For the sequel, it is convenient to specify the modulus of continuity, by the following definition (introduced in \cite{BM}). Let $\om:(0,\infty)\to (0,\infty)$ be a strictly increasing continuous function such that $\om(\tau)\to 0$ as $\tau\to 0^+$. We say that a domain $\Om$ is of class $C^{0,\om}$, if there exists a number $r>0$ such that, for every point $x_0\in\Ga$, there is a coordinate system $(y',y_N)\in\RR^{N-1}\times\RR$, and a function $\phi:\RR^{N-1}\to\RR$ such that
\begin{enumerate}[(i)]
\item
$B_r(x_0)\cap\Om=\{(y',y_N)\in B_r(x_0):y_N<\phi(y')\}$;
\item
$B_r(x_0)\cap\Ga=\{(y',y_N)\in B_r(x_0):y_N=\phi(y')\}$
\item
$|\phi(y')-\phi(z')|\le\om(|y'-z'|)$ for all $(y',\phi(y')), (z',\phi(z'))\in B_r(x_0)\cap\Ga$.
\end{enumerate}
In the sequel, it will be useful the function defined for $\ve>0$ by
$$
\psi(\ve)=\min_{0\leq s \leq r}\sqrt{s^2+\left[\om(s)-\ve\right]^2}
$$
--- this is the distance of the point $z_\ve=(0',\ve)\in\RR^{N-1}\times\RR$ from the graph of the function $\om$.
Notice that $\psi(\ve)=\ve$ if $\phi\in C^k$ with $k\ge 2$ and, otherwise, $\psi(\ve)\sim C\,\om^{-1}(\ve)$, for some positive constant $C$, where $\om^{-1}$ is the inverse function of $\om$.

\begin{thm}[Uniform convergence]
\label{th:uniform-elliptic}
Let $1<p\le \infty$ and $\Om$ be a domain of class $C^0$. Suppose that $u^\ve$ is the bounded (viscosity) solution of \eqref{G-elliptic}-\eqref{elliptic-boundary}.
\par
Then, as $\ve\to 0^+$, we have that 
\begin{equation}
\label{eq:uniform-convergence-infinity}
\ve\log\left\{u^\ve(x)\right\}+d_\Ga(x)=
\begin{cases}
O(\ve) \ &\mbox{ if } \ p=\infty, \\
O(\ve\log \ve) \ &\mbox{ if } \ p>N.
\end{cases}
\end{equation}
Moreover,  if $\Om$ is a $C^{0,\om}$ domain, it holds that
\begin{equation}
  \label{eq:uniform-convergence}
  \ve\,\log \left\{u^\ve(x)\right\}+\sqrt{p'}\,d_\Ga(x)=
  \begin{cases}
  \displaystyle
    O(\ve\log|\log\psi(\ve)|)\ &\mbox{ if } \ N=p, \\  
\displaystyle 
    O(\ve\log\psi(\ve))\ &\mbox{ if }\ 1<p<N.
  \end{cases}
\end{equation}
The formulas \eqref{eq:uniform-convergence-infinity} and \eqref{eq:uniform-convergence} hold uniformly on the compact subsets of $\ol\Om$.
\par
In particular, if $\ve\log\psi(\ve)\to 0$ as $\ve\to 0^+$, then the convergence in \eqref{eq:pointwise-elliptic} is uniform on every compact subset of $\ol\Om$.
\end{thm}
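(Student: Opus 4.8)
The plan is to prove Theorem~\ref{th:uniform-elliptic} by upgrading the pointwise two-sided barrier estimate~\eqref{barrier-chain} of Theorem~\ref{th:pointwise} to a \emph{uniform} one, by carefully tracking how the lower barrier depends on the choice of the exterior point $z$ and how that choice, in turn, can be made uniformly in $x$ when $\Om$ has some regularity. The upper bound in~\eqref{barrier-chain}, namely $\ve\log\{u^\ve(x)\}+\sqrt{p'}\,d_\Ga(x)\le\ve\log E_p^\ve(d_\Ga(x))$, is already uniform: by Lemma~\ref{elliptic-control-from-above} it is $O(\ve\log\ve)$ for $1<p<\infty$ and $O(\ve)$ for $p=\infty$ on any set where $d_\Ga$ is bounded, in particular on any compact $K\subset\ol\Om$. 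So the whole difficulty is the \emph{lower} bound, i.e.\ choosing, for each $x\in K$, an exterior point $z=z(x)\in\RR^N\setminus\ol\Om$ for which Lemma~\ref{elliptic-control-from-below} gives a loss $\sqrt{p'}\{|x-z|-d_\Ga(z)\}$ that is uniformly small together with the barrier error $\ve\log e_{p,z}^\ve(x)$.

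\medskip

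\textbf{Step 1: The geometric reduction.} For $x\in K$ with $d_\Ga(x)=:R_x$, let $y_x\in\Ga$ realize the distance, and place a small ball of exterior contact. In the $C^{0,\om}$ case, working in the local graph coordinates at $y_x$, I would take $z=z_\ve$ to be the point $(0',-\ve)$ in the outward direction (the analogue of the point $z_\ve$ appearing in the definition of $\psi(\ve)$, reflected to the exterior side), so that $d_\Ga(z)\ge\psi(\ve)$ by the very definition of $\psi$, while $|x-z|\le d_\Ga(x)+\ve+o(\ve)$ by the triangle inequality along the segment through $y_x$. Hence $|x-z|-d_\Ga(z)\le d_\Ga(x)+\ve+o(\ve)-\psi(\ve)\le d_\Ga(x)+C\ve$ (since $\psi(\ve)\le\ve$ always, this term is harmless at scale $\ve$). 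The subtle point is rather that $z$ must lie in $\RR^N\setminus\ol\Om$, which the $C^{0,\om}$ hypothesis guarantees for $\ve$ small, uniformly over $y_x\in\Ga\cap(\text{nbhd of }K)$ by compactness and the uniform radius $r$ in the definition.

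\medskip

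\textbf{Step 2: Estimating the barrier error $\ve\log e_{p,z}^\ve(x)$ uniformly.} For $p=\infty$, $e_{p,z}^\ve\equiv1$, so there is nothing to do and one gets $O(\ve)$. For $1<p<\infty$, by Lemma~\ref{elliptic-control-from-below},
\[
\ve\log e_{p,z}^\ve(x)=\ve\log\frac{\int_0^\infty e^{-\sqrt{p'}\frac{\cosh\te-1}{\ve}|x-z|}(\sinh\te)^\al\,d\te}{\int_0^\infty e^{-\sqrt{p'}\frac{\cosh\te-1}{\ve}d_\Ga(z)}(\sinh\te)^\al\,d\te}.
\]
I would apply Lemma~\ref{lem:technical} to the Laplace-type integral in the denominator with the small parameter $\ve/d_\Ga(z)\le\ve/\psi(\ve)$: this yields an asymptotic of the form $c\,(d_\Ga(z)/\ve)^{-(\al+1)/2}[1+o(1)]$, so $\ve\log(\text{denominator})=\frac{\al+1}{2}\,\ve\log(\ve/d_\Ga(z))+O(\ve)=\frac{\al+1}{2}\,\ve\log\ve-\frac{\al+1}{2}\,\ve\log d_\Ga(z)+O(\ve)$. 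Since $\al+1=\frac{N-1}{p-1}$, the coefficient sign and the comparison of $d_\Ga(z)$ against $\psi(\ve)$ is exactly what produces the three regimes: when $p<N$ the dominant term is $O(\ve\log\psi(\ve))$ coming from $d_\Ga(z)\gtrsim\psi(\ve)$; when $p=N$ one has $\al+1=1$ but the precise constant in Lemma~\ref{lem:technical} (a $\Gamma$-factor or a logarithmic correction, depending on how $\al$ sits) degenerates and one picks up only $O(\ve\log|\log\psi(\ve)|)$; when $p>N$, $\al+1<1$ and in fact the integral in Lemma~\ref{lem:technical} behaves differently (the weight $(\sinh\te)^\al$ is integrable at $0$ but the Laplace method gives a different power), and one checks the loss is only $O(\ve\log\ve)$ — crucially \emph{independent of $\psi$}, which is why~\eqref{eq:uniform-convergence-infinity} needs no regularity beyond $C^0$. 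The numerator is handled the same way with $|x-z|$ in place of $d_\Ga(z)$, and since $|x-z|$ ranges over a compact set bounded away from $0$ (for $x\in K$ with $d_\Ga(x)>0$; the case $x\in\Ga$ is a trivial limit), its contribution is $\frac{\al+1}{2}\ve\log\ve+O(\ve)$, uniformly.

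\medskip

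\textbf{Step 3: Assembling.} Subtracting the numerator and denominator estimates, the $\frac{\al+1}{2}\ve\log\ve$ terms cancel against each other in the ratio forming $e_{p,z}^\ve$ (both numerator and denominator of $e^\ve_{p,z}$ carry $\ve\log\ve$), leaving only the $\ve\log d_\Ga(z)$ term and $O(\ve)$; bounding $d_\Ga(z)$ from below by $\psi(\ve)$ and from above by a constant gives the claimed order. Combining with the geometric loss $\sqrt{p'}\{|x-z|-d_\Ga(x)-d_\Ga(z)\}=O(\ve)$ from Step~1 and feeding everything into~\eqref{barrier-chain}, together with the already-uniform upper bound, yields~\eqref{eq:uniform-convergence-infinity} and~\eqref{eq:uniform-convergence}. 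The final "in particular" clause is immediate: if $\ve\log\psi(\ve)\to0$ then the right-hand sides of~\eqref{eq:uniform-convergence} tend to $0$, giving uniform convergence in~\eqref{eq:pointwise-elliptic} on compact subsets.

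\medskip

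\textbf{The main obstacle} I anticipate is making the choice of $z=z(x)$ genuinely uniform in $x\in K$ while keeping $d_\Ga(z)\ge\psi(\ve)$: one must ensure the local graph representation at $y_x$ is used with the \emph{same} radius $r$ and modulus $\om$ for all $x$ (this is exactly the content of the $C^{0,\om}$ definition and a compactness argument over $\Ga$ near $K$), and one must verify $z\notin\ol\Om$ — equivalently that the point $z_\ve$ lies strictly below the graph of $-\om$ — which is where the definition of $\psi(\ve)$ as the distance to the graph of $\om$ is used in an essential way. The secondary technical point is the careful bookkeeping of the constant in Lemma~\ref{lem:technical} at the borderline exponent $p=N$ (where $\al=0$), which is responsible for the $\log|\log\psi(\ve)|$ rather than $\log\psi(\ve)$, and at $p>N$ (where $\al<0$), responsible for dropping the $\psi$-dependence entirely.
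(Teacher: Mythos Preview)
Your overall architecture matches the paper's proof: combine the already-uniform upper bound from Lemma~\ref{elliptic-control-from-above} with a uniform lower bound obtained by choosing, for each $x\in K$, the exterior point $z_\ve$ at height $\ve$ in local graph coordinates at a nearest boundary point $y_x$, and then estimate $\ve\log e_{p,z_\ve}^\ve(x)$ via Lemma~\ref{lem:technical}. Step~1 and the handling of $p=\infty$ are essentially correct.

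The genuine gap is in Step~2, and it propagates into Step~3. You apply the $\si\to\infty$ asymptotics of Lemma~\ref{lem:technical} to the \emph{denominator} of $e_{p,z_\ve}^\ve$, with $\si=\sqrt{p'}\,d_\Ga(z_\ve)/\ve$. But $z_\ve$ is at distance $\ve$ from $y_x\in\Ga$, so $d_\Ga(z_\ve)\le\ve$ and hence $\si\le\sqrt{p'}$ is \emph{bounded}; moreover $\si\ge\sqrt{p'}\,\psi(\ve)/\ve$, which may tend to $0$ when the domain is rougher than $C^2$. So the formula $f(\si)\sim c\,\si^{-(\al+1)/2}$ is simply not available here. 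What the paper does instead is bound the denominator from above using the $\si\to 0$ part of Lemma~\ref{lem:technical}, and \emph{that} is where the trichotomy actually originates: for $\al>0$ (i.e.\ $p<N$) one has $f(\si)\sim\Ga(\al)\,\si^{-\al}$, giving the $\al\,\ve\log\psi(\ve)$ term; for $\al=0$ (i.e.\ $p=N$) one has $f(\si)\sim\log(1/\si)$, producing $\ve\log|\log\psi(\ve)|$; and for $-1<\al<0$ (i.e.\ $p>N$) one has $f(\si)\to\mathrm{const}$, so the denominator contributes only $O(\ve)$ and the numerator's $O(\ve\log\ve)$ survives alone, independently of $\psi$. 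Your remark that ``the precise constant\dots degenerates'' at $p=N$ and that ``the integral behaves differently'' at $p>N$ is pointing at the right phenomenon but misattributing it to the large-$\si$ expansion, where no such degeneration occurs.

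A smaller issue in the same step: for the numerator you argue that $|x-z|$ is bounded away from $0$ on $K$. This is false if $K$ meets $\Ga$. The paper instead uses monotonicity of $f$: since $f$ is decreasing, bounding $|x-z_\ve|\le 2d$ (with $d=\max_K\{d_\Ga,|\cdot|\}$) gives $\text{numerator}\ge f(2d\sqrt{p'}/\ve)$, and now the argument $2d\sqrt{p'}/\ve$ genuinely tends to $\infty$, so the large-$\si$ asymptotics applies and yields $\tfrac{\al+1}{2}\ve\log\ve+O(\ve)$ uniformly on $K$.
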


\begin{proof}
For any fixed compact subset $K$ of $\ol\Om$ we let $d$ be the positive number, defined as  
$$
d=
\max_{x'\in K}\{d_\Ga(x'), |x'|\}.
$$ 
To obtain the uniform convergence in \eqref{eq:pointwise-elliptic} we will choose $z=z_\ve$ independently on $x\in K$, as follows.
\par 
If $\Om$ is a $C^{0,\om}$ domain, fix $x\in K$, take $y\in\Ga$ minimizing the distance to $x$, and 
consider a coordinate system in $\RR^{N-1}\times\RR$ such that $y=(0', 0)$. If we take $z_\ve=(0',\ve)$, then $z_\ve\in\RR^N\setminus\ol\Om$ when $\ve$ is sufficiently small. 
With this choice, \eqref{barrier-chain} reads as
\begin{multline*}
\sqrt{p'}\left\{-|x-z_\ve|+d_\Ga(x)+d_\Ga(z_\ve)\right\}+\ve\log e_{p,z_\ve}^\ve(x)\leq\\
\ve\log u_p^\ve(x) +\sqrt{p'}\,d_\Ga(x)\leq
\ve \log E_p^\ve(d_\Ga(x)).
\end{multline*}
Hence, we get:
\begin{equation}
\label{barrier-chain-uniform}
-\sqrt{p'}\,\ve+\ve\log e_{p,z_\ve}^\ve(x)\leq
\ve\log u_p^\ve(x) +\sqrt{p'}\,d_\Ga(x)\leq
\ve \log E_p^\ve(d_\Ga(x)),
\end{equation}
since $d_\Ga(z_\ve)\ge 0$ and $|x-z_\ve|\le d_\Ga(x)+\ve$. 
\begin{figure}
  \centering
  \includegraphics[scale=0.6]{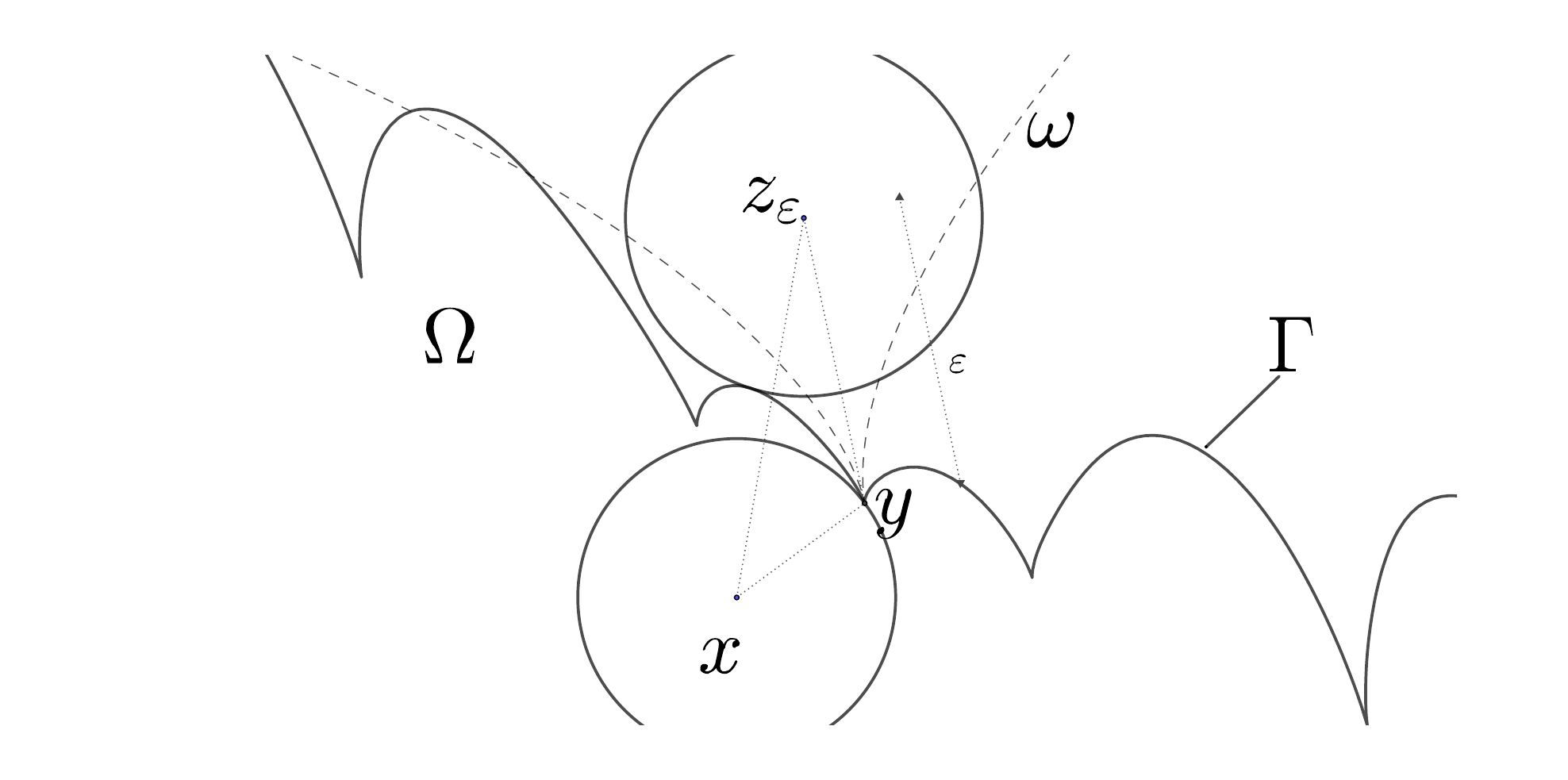} 
  \caption{The geometric description of the approximating scheme in the proof of Theorem \ref{th:uniform-elliptic}.}
  \label{fig:picture}
\end{figure}
\par
Thus, if $p=\infty$, Lemmas \ref{elliptic-control-from-above} and \eqref{elliptic-control-from-below} give that
$$
-\ve\le \ve \log \left\{u^\ve(x)\right\}+d_\Ga(x)\le \ve\log\left\{\frac{2}{1+e^{-\frac{d}{\ve}}}\right\},
$$
being $d_\Ga(x)\le d$, and \eqref{eq:uniform-convergence-infinity} follows at once.

Next, 
if $1<p<\infty$, we recall that $\ve\log E_p^\ve(d_\Ga)=O(\ve\log \ve)$ on $K$ as $\ve\to 0^+$, by Lemma \ref{elliptic-control-from-above}. On the other hand, by observing that $d_\Ga(z_\ve)\ge \psi(\ve)$, by our assumption on $\Om$, and that also $|x-z_\ve|\le 2 d$ for $\ve\le d$,
\eqref{error-from-below} gives on $K$ that
$$
  e_{p,z_\ve}^\ve\ge
 \frac{\int_{0}^{\infty}e^{-\frac{2 d\,\sqrt{p'}}{\ve}(\cosh\te-1)}\left(\sinh\te\right)^{\al}d\te}
{\int_{0}^{\infty}e^{-\frac{\sqrt{p'}\psi(\ve)}{\ve}(\cosh\te-1)}\left(\sinh\te\right)^{\al}d\te}.
$$
\par
Now, to this formula we apply Lemma \ref{lem:technical} with $\si=2 d \sqrt{p'}/\ve$ at the numerator and $\si=\sqrt{p'} \psi(\ve)/\ve$ at the denominator. Thus, since by \eqref{def-alpha} the sign of $\al$ is that of $N-p$, on $K$ we have as $\ve\to 0$  that
\begin{multline*}
 \ve\log\left(e_{p,z_\ve}^\ve\right)\ge \al\,\ve\log\psi(\ve)-\frac{\al-1}{2}\,\ve\log\ve+O(\ve)=\\
\al\ve\log\psi(\ve)+O(\ve\log\ve),
\end{multline*}
if $p<N$,
$$
 \ve\log\left(e_{p,z_\ve}^\ve\right)\ge -\ve\,\log|\log\psi(\ve)|+O(\ve \log\ve),
$$
if $p=N$, and
$$
 \ve\log\left(e_{p,z_\ve}^\ve\right)\ge \frac{\al+1}{2}\,\ve\log\ve+O(\ve),
$$
if $p>N$.
\end{proof}

\section{Asymptotics for the $q$-means of a ball}
\label{sec:asymptotics-second-order}
Throughout this subsection we assume that $\Om$ is a (not necessarily bounded) domain that satisfies  both the uniform exterior and interior ball conditions, i.e. there exist $r_i,r_e>0$ such that every $y\in\Ga$ has the property that there exist $z_i\in\Om$ and $z_e\in\RR^{N}\setminus\ol{\Om}$ for which 
\begin{equation}
\label{uniform-balls-condition}
B_{r_i}(z_i)\subset \Om \subset \RR^{N}\setminus \ol{B}_{r_e}(z_e)\ \mbox{ and } \ \ol{B}_{r_i}(z_i)\cap\ol{B}_{r_e}(z_e)=\{y\}.
\end{equation}

\subsection{Enhanced barriers}

We begin by refining the barriers given in Lemma \ref{elliptic-control-from-above} and \ref{elliptic-control-from-below}, at least in a strip near the boundary $\Ga$. To this aim, we will use the notation:
$$
\Om_\rho=\{y\in\Om:\, d_\Ga(y)\leq \rho\}, \ \rho>0.
$$
We will also use the two families of probability measures on the intervals $[0, \infty)$ and $[0,\pi]$ with densities defined, respectively, by
\begin{eqnarray*}
&&\displaystyle d\nu^\si(\te)=\frac{e^{-\si\,(\cosh\te-1)}(\sinh\te)^\al}{\int_{0}^{\infty}e^{-\si (\cosh\te-1)}(\sinh\te)^\al\,d\te}\,d\te,\\
&&\displaystyle d\mu^\si(\te)=\frac{e^{-\si\,(1-\cos\te)}(\sin\te)^\al}{\int_{0}^{\pi}e^{-\si (1-\cos\te)}(\sin\te)^\al\,d\te}\,d\te.
\end{eqnarray*}

\begin{lem}
  \label{barriers-for-c2-domain}
Let $u^\ve$ be the bounded (viscosity) solution of \eqref{G-elliptic}-\eqref{elliptic-boundary}.
\par 
If $p\in (1,\infty)$, we set for $\si_\ve=\sqrt{p'} r_e/\ve$:
$$
U^\ve(\tau)=
\int_{0}^{\infty}e^{-\tau\cosh\te} d\nu^{\si_\ve}(\te), \quad \tau\ge 0,
$$
and
\begin{equation*}
V^\ve(\tau)=
\begin{cases}
\displaystyle
\int_{0}^{\pi}e^{-\tau\cos\te} d\mu^{\si_\ve}(\te)\ &\mbox { if }\ 0\le \tau < \si_\ve, \\
\vspace{-10pt}\\ 
\displaystyle
\left\{\int_{0}^{\pi}e^{-\tau\cos\te} d\mu^0(\te)\right\}^{-1}\ &\mbox{ if }\ \tau\ge  \si_\ve.
\end{cases}
\end{equation*}
If $p=\infty$, we set $U^\ve(\tau)=e^{-\tau}$ and 
\begin{equation*}
V^\ve(\tau)=
\begin{cases}
\displaystyle
\frac{\cosh(\si_\ve-\tau)}{\cosh\si_\ve}\ &\mbox{ if } \ 0\le \tau < \si_\ve, \\
\vspace{-8pt}\\
1/\cosh\tau\ &\mbox{ if }\ \tau\ge \si_\ve.
\end{cases}
\end{equation*}
\par
Then, we have that
\begin{equation}
\label{eq:U-V-barriers}
U^\ve\left(\frac{d_\Ga(x)}{\ve/\sqrt{p'}}\right)\le 
u^\ve(x)\le
V^\ve\left(\frac{d_\Ga(x)}{\ve/\sqrt{p'}}\right),
\end{equation}
for any $x\in\ol\Om$.
\end{lem}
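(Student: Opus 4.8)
The strategy is to use the comparison principle with \emph{two} explicit radial comparison functions built from the exterior and interior ball conditions \eqref{uniform-balls-condition}, exactly as in Lemmas \ref{elliptic-control-from-above} and \ref{elliptic-control-from-below}, but now without throwing away the dependence on the radius of the osculating ball. For the lower bound I would fix $x\in\ol\Om$, let $y\in\Ga$ be a (nearest) boundary point with $d_\Ga(x)=|x-y|$ when $x$ is close to $\Ga$, and use the exterior ball $B_{r_e}(z_e)$ touching $\Ga$ at $y$; the bounded solution $v^\ve$ of \eqref{G-elliptic}-\eqref{elliptic-boundary} in $\RR^N\setminus\ol{B_{r_e}(z_e)}$ is given explicitly by \eqref{solution-exterior}, satisfies $v^\ve\le 1$ on $\Ga$ (since $\Ga\subset\RR^N\setminus B_{r_e}(z_e)$), hence $v^\ve\le u^\ve$ on $\ol\Om$ by comparison. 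Rescaling $v^\ve$ to the unit ball, $v^\ve(x)=u^{\ve/r_e}_{B}(\text{point at distance }|x-z_e|/r_e\text{ from }0)$, and using $|x-z_e|=r_e+d_\Ga(x)$ for $x$ in the relevant strip, one reads off $u^\ve(x)\ge U^\ve(d_\Ga(x)/(\ve/\sqrt{p'}))$ after simplifying the integral in \eqref{solution-exterior}; the substitution that turns $e^{-\sqrt{p'}\cosh\te\,|x|/\ve}$ over $e^{-\sqrt{p'}\cosh\te\,R/\ve}$ into the probability measure $d\nu^{\si_\ve}$ times $e^{-\tau\cosh\te}$ with $\tau=d_\Ga(x)/(\ve/\sqrt{p'})$ is the routine bookkeeping here.

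For the upper bound I would instead use the interior ball $B_{r_i}(z_i)\subset\Om$ touching $\Ga$ at $y$, but — and this is the twist relative to Lemma \ref{elliptic-control-from-above} — I need the osculating ball from the \emph{inside} so that I can compare $u^\ve$ with the solution on a ball that contains the relevant part of $\Om$ near $x$; more precisely one uses that for $x\in\Om_\rho$ the ball $B_{d_\Ga(x)}(x)$ lies in $\Om$ and touches $\Ga$, so $u^\ve(x)\le u^\ve_{B_{d_\Ga(x)}(x)}(x)$, and then rescales to the unit ball as in Lemma \ref{elliptic-control-from-above}. Writing the ball solution \eqref{ball solution formula} at the center gives $u^\ve(x)\le\big(\int_0^\pi e^{-\sqrt{p'}(R/\ve)(1-\cos\te)}(\sin\te)^\al d\te\big)^{-1}\int_0^\pi(\sin\te)^\al d\te$ with $R=d_\Ga(x)$; the point is to massage this into $V^\ve$. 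The case split in the definition of $V^\ve$ at $\tau=\si_\ve$ (equivalently $d_\Ga(x)=r_e$) is precisely because the first expression $\int_0^\pi e^{-\tau\cos\te}\,d\mu^{\si_\ve}(\te)$ is only a valid (and sharp) bound while $d_\Ga(x)\le r_e$; beyond that radius one reverts to the crude bound $V^\ve(\tau)=\big(\int_0^\pi e^{-\tau\cos\te}d\mu^0(\te)\big)^{-1}$, which is just the content of Lemma \ref{elliptic-control-from-above} rewritten, and one checks the two pieces agree (up to the inequality direction) at the junction. The $p=\infty$ case is a direct computation from the explicit $\cosh$ and exponential formulas in \eqref{ball solution formula} and \eqref{solution-exterior}, together with the identity $\cosh(\si_\ve-\tau)/\cosh\si_\ve$ for the shifted exterior barrier.

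The slightly delicate point, and the one I would be most careful about, is matching the \emph{scalings}: the exterior barrier naturally carries the parameter $\si_\ve=\sqrt{p'}\,r_e/\ve$ (the rescaled radius of the exterior ball) inside the measure $d\nu^{\si_\ve}$, while the raw distance enters through $\tau=\sqrt{p'}\,d_\Ga(x)/\ve$; one must verify that the change of variables splitting $e^{-\sqrt{p'}\cosh\te\,|x-z_e|/\ve}=e^{-\si_\ve(\cosh\te-1)}\cdot e^{-\si_\ve}\cdot e^{-(\tau)\cosh\te}\cdot e^{\tau}$ (using $|x-z_e|=r_e+d_\Ga(x)$) produces exactly the normalizing constant of $d\nu^{\si_\ve}$ in numerator and denominator, so that the stray factors $e^{\pm\si_\ve}$, $e^{\pm\tau}$ cancel correctly and leave $U^\ve(\tau)$. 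An entirely analogous cancellation, but with $\cos$ in place of $\cosh$ and with the measure $d\mu^{\si_\ve}$, handles the interior/ball upper barrier in the range $\tau<\si_\ve$. Once these identifications are in place, \eqref{eq:U-V-barriers} is immediate from $v^\ve\le u^\ve\le u^\ve_{B_{d_\Ga(x)}(x)}$; no further estimates are needed at this stage, since the asymptotic evaluation of $U^\ve$ and $V^\ve$ (via Lemma \ref{lem:technical}) is deferred to the proof of Theorem \ref{th:q-mean}.
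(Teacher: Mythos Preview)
Your lower bound is exactly the paper's argument: exterior ball $B_{r_e}(z_e)$, comparison, and the identity $|x-z_e|=r_e+d_\Ga(x)$ (which holds for all $x\in\ol\Om$, not only in a strip, since $x$, $y$, $z_e$ are colinear). The bookkeeping you describe indeed collapses the quotient in \eqref{solution-exterior} into $\int_0^\infty e^{-\tau\cosh\te}\,d\nu^{\si_\ve}(\te)=U^\ve(\tau)$.

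The upper bound, however, has a genuine gap. You start by naming the interior ball $B_{r_i}(z_i)$ but then, under ``more precisely'', revert to the ball $B_{d_\Ga(x)}(x)$ centered at $x$, evaluate the solution \eqref{ball solution formula} \emph{at its center}, and try to massage the result into $V^\ve$. That computation yields precisely
\[
u^\ve(x)\le \frac{\int_0^\pi(\sin\te)^\al\,d\te}{\int_0^\pi e^{\sqrt{p'}\,d_\Ga(x)\cos\te/\ve}(\sin\te)^\al\,d\te}
=\Bigl(\int_0^\pi e^{-\tau\cos\te}\,d\mu^0(\te)\Bigr)^{-1},
\]
which is only the \emph{second} branch of $V^\ve$ (the crude bound from Lemma~\ref{elliptic-control-from-above}). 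You cannot massage this into the first branch: the first branch depends on $\si_\ve$ through the measure $d\mu^{\si_\ve}$ and converges to $e^{-\tau}$ as $\ve\to 0$, while the expression above is independent of $\ve$.

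The correct route for the sharp first branch is what the paper does: for $x\in\Om_{r_i}$ compare $u^\ve$ with the solution on the \emph{fixed-radius} interior ball $B_{r_i}(z_i)$ and evaluate \eqref{ball solution formula} at the \emph{off-center} point $x$, using $|x-z_i|=r_i-d_\Ga(x)$. This gives
\[
u^\ve(x)\le \frac{\int_0^\pi e^{\sqrt{p'}\,(r_i-d_\Ga(x))\cos\te/\ve}(\sin\te)^\al\,d\te}{\int_0^\pi e^{\sqrt{p'}\,r_i\cos\te/\ve}(\sin\te)^\al\,d\te}
=\int_0^\pi e^{-\tau\cos\te}\,d\mu^{\si_\ve}(\te),
\]
after the same cancellation you describe for the lower bound. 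The case split at $\tau=\si_\ve$ then arises because this comparison is only available while $x\in B_{r_i}(z_i)$, i.e.\ $d_\Ga(x)<r_i$; outside that strip one falls back on Lemma~\ref{elliptic-control-from-above}. (The statement writes $\si_\ve$ with $r_e$; the proof is cleanest if one takes $r_i=r_e$, which one may always arrange by shrinking the larger radius.)
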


\begin{proof}
  Let $p\in(1,\infty)$. For any $x\in\Om$ we can consider $y\in \Ga$ such that $|x-y|=d_\Ga(x)$. From the assumptions on $\Om$ there exists $z_e\in\RR^N\setminus\ol\Om$ such that \eqref{uniform-balls-condition} holds for $y$. As seen in the proof of Lemma \ref{elliptic-control-from-below}, by using the comparison principle and the explicit expression \eqref{solution-exterior}, we obtain
\begin{equation*}
u^\ve(x)\ge\frac{\int_0^\infty e^{-\sqrt{p'} |x-z_e|/\ve\,\cosh\te}(\sinh\te)^\al\,d\te}{\int_{0}^{\infty}e^{-\sqrt{p'} r_e/\ve\,\cosh\te}(\sinh\te)^\al\,d\te}.
\end{equation*}
Thus, the fact that $|x-z_e|=d_\Ga(x)+r_e$ gives the first inequality in \eqref{eq:U-V-barriers}, by recalling the definiton of $U^\ve$.
\par 
To obtain the second inequality in \eqref{eq:U-V-barriers} we proceed differently whether $x\in\Om_{r_i}$ or not. Indeed, if $x\in\Om_{r_i}$, there exists $z_i\in\Om$ such that \eqref{uniform-balls-condition} holds for some $y\in\Ga$ and $x\in B_{r_i}(z_i)$; moreover, since $\pa B_{d_\Ga(x)}(x)\cap \pa B_{r_i}(z_i)=\{y\}$, we observe that $x$ lies in the segment joining $y$ to $z_i$, and hence $|x-z_i|=r_i-d_\Ga(x)$. Again, by using the comparison principle and the expression in \eqref{ball solution formula}, we get that 
\begin{equation*}
u^\ve(x)\le
\frac{\int_{0}^{\pi}e^{\sqrt{p'}\,\cos\te\,\frac{|x-z_i|}{\ve}}(\sin\te)^\al\,d\te}{\int_{0}^{\pi}e^{\sqrt{p'}\,\cos\te\,\frac{r_i}{\ve}}(\sin\te)^\al\,d\te},
\end{equation*}
that, by using the definition of $V^\ve$ and the fact that $|x-z_i|=r_i-d_\Ga(x)$, leads to the second inequality in \eqref{eq:U-V-barriers}.
\par
If $x\in\Om\setminus\ol\Om_{r_i}$, we just note that the expression of $V^\ve$ was already obtained in Lemma \ref{elliptic-control-from-above}. 
\par
The case $p=\infty$ can be treated with similar arguments. 
\end{proof}

\subsection{Asymptotics for $q$-means}

From now on, in order to use the function $\Pi_\Ga$ defined in \eqref{eq:Pi-gamma},  we assume that $\Om$ is a domain of class $C^2$ (not necessarily bounded). 
\par
First, we recall from \cite[Lemma 2.1]{MS-PRSE} the following geometrical lemma.

\begin{lem}
\label{lem:geometrical-asymptotics}
Let $x\in\Om$ and assume that, for $R>0$, there exists $y_x\in\Ga$ such that $\ol{B_R(x)}\cap(\RR^N\setminus\Om)=\{y_x\}$ and that $\ka_j(y_x)<1/R$ for $j=1,\dots,N-1$.
\par 
Then, it holds that
$$
\lim_{s\to 0^+}\frac{\cH_{N-1}(\Ga_s\cap B_R(x))}{s^{\frac{N-1}{2}}}=\frac{\om_{N-1}\,(2R)^{\frac{N-1}{2}}}{(N-1)}\left[\Pi_\Ga(y_x)\right]^{-\frac1{2}}, 
$$
where $\cH_{N-1}$ denotes $(N-1)$-dimensional Hausdorff measure and $\om_{N-1}$ is the surface area of a unit sphere in $\RR^{N-1}$.
\end{lem}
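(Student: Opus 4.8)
The plan is to localize near $y_x$ and reduce the statement to a second-order Taylor expansion of $\Ga$ at $y_x$ together with a parabolic rescaling, exploiting that $\Ga_s\cap B_R(x)$ collapses onto $\{y_x\}$ as $s\to 0^+$; since the lemma is quoted from \cite[Lemma 2.1]{MS-PRSE} one may of course simply invoke it, but here is the outline one would reproduce. First I would fix coordinates so that $y_x=0$, the inner unit normal $\nu(y_x)$ equals $e_N$, and $e_1,\dots,e_{N-1}$ are principal directions of $\Ga$ at $y_x$; then $x=R\,e_N$. In a small ball $B_\rho(0)$ the $C^2$ boundary $\Ga$ is a graph $y_N=\phi(y')$ with $\phi(0)=0$, $\na\phi(0)=0$ and $D^2\phi(0)=\diag\bigl(\ka_1(y_x),\dots,\ka_{N-1}(y_x)\bigr)$, while $\pa B_R(x)$ is near $0$ the graph $y_N=\tfrac1{2R}\,|y'|^2+o(|y'|^2)$, with $B_R(x)$ lying on its upper side. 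A preliminary point, to be settled first, is that $\Ga_s\cap B_R(x)\subset B_\rho(0)$ for every sufficiently small $s$: this follows from the hypothesis $\ol{B_R(x)}\cap(\RR^N\setminus\Om)=\{y_x\}$ by a compactness and positive-distance argument, so that all the analysis below takes place in these coordinates.

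Next I would parametrize $\Ga_s$ near $0$ by the normal map $y\mapsto y+s\,\nu(y)$ with $y=(y',\phi(y'))\in\Ga$, which for $s$ small is a $C^1$ diffeomorphism onto $\Ga_s$ since $\Ga\in C^2$. Using $\nu(y)=(1+|\na\phi(y')|^2)^{-1/2}\,(-\na\phi(y'),1)$ and Taylor expanding, the image point $(w',w_N)$ satisfies, uniformly for $|y'|\le C\sqrt{s}$, $w'=y'+o(\sqrt{s})$ and $w_N=\tfrac12\sum_j\ka_j(y_x)\,y_j^2+s+o(s)$. Hence the membership condition $w_N>\tfrac1{2R}|w'|^2+O(w_N^2)$ becomes, after the substitution $y'=\sqrt{s}\,\eta$, the inequality $\tfrac12\sum_j\bigl(\tfrac1R-\ka_j(y_x)\bigr)\eta_j^2<1+o(1)$; the hypothesis $\ka_j(y_x)<1/R$ makes the limiting region the bounded ellipsoid $E=\bigl\{\eta\in\RR^{N-1}:\ \tfrac12\sum_j(\tfrac1R-\ka_j(y_x))\,\eta_j^2<1\bigr\}$. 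Writing $\cH_{N-1}(\Ga_s\cap B_R(x))$ as the integral over the corresponding set $D_s\subset\RR^{N-1}$ of the area element of this parametrization --- which tends to $1$ uniformly on $D_s$ as $s\to0^+$ --- and changing variables $y'=\sqrt{s}\,\eta$, one obtains $s^{-(N-1)/2}\,\cH_{N-1}(\Ga_s\cap B_R(x))\to|E|$; a direct computation of the ellipsoid volume then yields $|E|=\frac{\om_{N-1}}{N-1}\prod_{j=1}^{N-1}\bigl(\tfrac{2}{1/R-\ka_j(y_x)}\bigr)^{1/2}=\frac{\om_{N-1}\,(2R)^{(N-1)/2}}{N-1}\,\Pi_\Ga(y_x)^{-1/2}$, which is the claimed limit.

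The step I expect to be the main obstacle is the uniform control of the error terms under the parabolic dilation $y'=\sqrt{s}\,\eta$: one must show that the rescaled domains $s^{-1/2}D_s$ converge to $E$ (for instance by trapping them between $(1-\de)E$ and $(1+\de)E$ for $s\le s_\de$), that the area element of the normal parametrization is $1+o(1)$ uniformly on the shrinking set $D_s$, and --- the preliminary point above --- that $\Ga_s\cap B_R(x)$ has no component staying away from $y_x$. Once these uniformities are in place, the limit passes through the integral either by dominated convergence or simply through the sandwich with $(1\pm\de)E$, and what remains is routine bookkeeping.
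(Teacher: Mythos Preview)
The paper does not prove this lemma at all: it is stated as a quotation from \cite[Lemma~2.1]{MS-PRSE}, with no argument given. You explicitly acknowledge this, and your outline is a faithful and correct reconstruction of the standard proof---local graph coordinates adapted to the principal directions at $y_x$, parametrization of $\Ga_s$ by the normal map, parabolic rescaling $y'=\sqrt{s}\,\eta$, identification of the limiting region as the ellipsoid $E=\{\eta:\tfrac12\sum_j(1/R-\ka_j(y_x))\eta_j^2<1\}$, and the volume computation $|E|=\frac{\om_{N-1}}{N-1}(2R)^{(N-1)/2}\,\Pi_\Ga(y_x)^{-1/2}$. The three uniformity issues you flag (localization of $\Ga_s\cap B_R(x)$ near $y_x$, convergence of the rescaled domains to $E$, and the area element tending to~$1$) are precisely the points that require care, and your indicated treatments are adequate. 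There is nothing to compare against in the present paper; your argument is essentially what one finds in the cited source.
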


The next lemma gives the asymptotic formula for $\ve\to 0^+$ for the $q$-mean on $B_R(x)$ of a quite general class of functions, which includes both the barriers $U^\ve$ and $V^\ve$. 

\begin{lem}
\label{lem:q-mean}
Set $1<q<\infty$,.Let $x\in\Om$ and assume that, for $R>0$, there exists $y_x\in\Ga$ such that $\ol{B_R(x)}\cap(\RR^N\setminus\Om)=\{y_x\}$ and that $\ka_j(y_x)<1/R$ for $j=1,\dots,N-1$.
\par
Let $\{\xi_n\}_{n\in\NN}$ and $\{f_n\}_{n\in\NN}$ be sequences such that
\begin{enumerate}[(i)]
\item
$\xi_n>0$ and $\xi_n\to 0$ as $n\to\infty$;
\item 
$f_n:[0,\infty)\to[0,\infty)$ are decreasing functions;
\item
$f_n$ converges to a function $f$ almost everywhere as $n\to\infty$;
\item
it holds that
\begin{equation}
  \label{integral-condition}
\lim_{n\to\infty}
\int_{0}^{\infty}f_n(\tau)^{q-1}\,\tau^{\frac{N-1}{2}}\,d\tau=\\
\int_{0}^{\infty}f(\tau)^{q-1}\,\tau^{\frac{N-1}{2}}\,d\tau,
\end{equation}
and the last integral converges.
\end{enumerate}
For some $1<q<\infty$, let $\mu_{q,n}(x)$ be the $q$-mean of $f_n(d_\Ga/\xi_n)$ on $B_R(x)$.
\par
Then we have:
\begin{multline}
\label{eq:q-mean-barrier}
\lim_{n\to\infty}
  \left(\frac{R}{\xi_n}\right)^{\frac{N+1}{2(q-1)}}\mu_{q,n}(x)=\\
\left\{\frac{2^{-\frac{N+1}{2}}N!}{\Ga\left(\frac{N+1}{2}\right)^2}\int_{0}^{\infty}f(\tau)^{q-1}\tau^{\frac{N-1}{2}}\,d\tau\right\}^{\frac1{q-1}}\left[\Pi_\Ga(y_x)\right]^{-\frac1{2(q-1)}}.
\end{multline}
\end{lem}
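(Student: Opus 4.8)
The plan is to reduce the computation of the $q$-mean to an elementary minimization over $\la\in\RR$ of the functional $\la\mapsto\nr f_n(d_\Ga/\xi_n)-\la\nr_{L^q(B_R(x))}^q$, and then to pass to the limit using the coarea-type asymptotics of Lemma \ref{lem:geometrical-asymptotics}. First I would recall that, since $1<q<\infty$, the $q$-mean $\mu=\mu_{q,n}(x)$ is characterized as the unique critical point of the strictly convex function $g(\la)=\int_{B_R(x)}|f_n(d_\Ga/\xi_n)-\la|^q\,dy$, i.e. as the unique solution of
$$
\int_{B_R(x)}|f_n(d_\Ga(y)/\xi_n)-\mu|^{q-2}\bigl(f_n(d_\Ga(y)/\xi_n)-\mu\bigr)\,dy=0.
$$
Because $f_n$ is nonnegative and decreasing and $\xi_n\to0$, the function $f_n(d_\Ga/\xi_n)$ is essentially supported (up to negligible mass) in a thin strip $\Ga_{\rho}\cap B_R(x)$ with $\rho\to0$; hence I expect $\mu_{q,n}(x)\to0$, and the whole point is to capture the \emph{rate}. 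The natural move is to rescale: set $\mu_{q,n}(x)=\left(\xi_n/R\right)^{\frac{N+1}{2(q-1)}}m_n$ and show $m_n$ converges to the bracketed constant on the right-hand side of \eqref{eq:q-mean-barrier}.

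The key computation is the asymptotics of the integral $\int_{B_R(x)}f_n(d_\Ga(y)/\xi_n)^{q-1}\,dy$ (and, more generally, of $\int_{B_R(x)}|f_n(d_\Ga/\xi_n)-\mu|^{q-2}(f_n(d_\Ga/\xi_n)-\mu)\,dy$ once $\mu$ is known to be of the conjectured order). Using the coarea formula,
$$
\int_{B_R(x)}f_n(d_\Ga(y)/\xi_n)^{q-1}\,dy=\int_0^{\infty}f_n(s/\xi_n)^{q-1}\,\cH_{N-1}\bigl(\Ga_s\cap B_R(x)\bigr)\,ds,
$$
and the substitution $s=\xi_n\tau$ turns this into $\xi_n\int_0^\infty f_n(\tau)^{q-1}\,\cH_{N-1}(\Ga_{\xi_n\tau}\cap B_R(x))\,d\tau$. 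By Lemma \ref{lem:geometrical-asymptotics}, $\cH_{N-1}(\Ga_{s}\cap B_R(x))\sim \frac{\om_{N-1}(2R)^{\frac{N-1}{2}}}{N-1}[\Pi_\Ga(y_x)]^{-1/2}\,s^{\frac{N-1}{2}}$ as $s\to0^+$, so I would feed this in together with hypotheses (i)--(iv) to obtain
$$
\int_{B_R(x)}f_n(d_\Ga/\xi_n)^{q-1}\,dy\sim \frac{\om_{N-1}\,(2R)^{\frac{N-1}{2}}}{N-1}\,[\Pi_\Ga(y_x)]^{-1/2}\,\xi_n^{\frac{N+1}{2}}\int_0^\infty f(\tau)^{q-1}\tau^{\frac{N-1}{2}}\,d\tau
$$
as $n\to\infty$. (Passing the limit through the $\tau$-integral is exactly what assumption (iv) is designed to license; monotonicity of $f_n$ plus a.e.\ convergence and a uniform tail bound coming from (iv) handle the rest via a Vitali/generalized-dominated-convergence argument.) One also needs $|B_R(x)|=\om_N R^N$ with $\om_N=2\pi^{N/2}/(N\Ga(N/2))$, and the elementary identity $\om_{N-1}\om_N^{-1}$-type rearrangement, to reconcile $\frac{\om_{N-1}2^{(N-1)/2}}{N-1}$ with the factor $\frac{2^{-(N+1)/2}N!}{\Ga((N+1)/2)^2}$ appearing in \eqref{eq:q-mean-barrier}; this is a routine Gamma-function bookkeeping step that I would not belabor.

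With these asymptotics in hand I would close the argument as follows. Inserting the ansatz $\mu=\left(\xi_n/R\right)^{\frac{N+1}{2(q-1)}}m$ into the criticality equation and rescaling, one finds that $m_n$ solves (to leading order) a fixed algebraic equation whose unique positive root is the bracketed constant in \eqref{eq:q-mean-barrier}. More concretely: for $\mu$ of this exact order, the term $\int_{B_R(x)}|f_n(d_\Ga/\xi_n)-\mu|^{q-2}(f_n(d_\Ga/\xi_n)-\mu)\,dy$ splits, after rescaling $d_\Ga=\xi_n\tau$, into the integral over the thin strip where $f_n(\tau)$ dominates $m$ — contributing $\xi_n^{\frac{N+1}{2}}$ times (asymptotically) $\frac{\om_{N-1}(2R)^{(N-1)/2}}{N-1}[\Pi_\Ga]^{-1/2}\int_0^\infty f(\tau)^{q-1}\tau^{(N-1)/2}d\tau$ — minus the contribution $-|B_R(x)|\,\mu^{q-1}=-\om_N R^N m^{q-1}\xi_n^{\frac{N+1}{2}}R^{-\frac{N+1}{2}}$ from the bulk where $f_n\approx0$; the cross terms are lower order. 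Setting the sum to zero and solving for $m^{q-1}$ gives precisely the claimed limit. The main obstacle, and the step deserving the most care, is the uniform control needed to justify the limit exchange in the $\tau$-integral \emph{uniformly in the parameter} $\mu$ ranging over an order-$(\xi_n/R)^{\frac{N+1}{2(q-1)}}$ window, together with verifying that the bulk and cross-term contributions are genuinely of lower order; monotonicity of $f_n$ and the single integrability hypothesis \eqref{integral-condition} are the tools I would lean on, possibly after first establishing the a priori bound $0\le\mu_{q,n}(x)\le \sup f_n$ and a matching lower bound of the correct order by testing $g$ at a suitable trial value.
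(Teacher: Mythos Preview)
Your plan is essentially the paper's own proof. The paper uses exactly the same characterization of the $q$-mean (written there as the balance $\int_{B_R(x)}[f_n(d_\Ga/\xi_n)-\mu_n]_+^{q-1}\,dy=\int_{B_R(x)}[\mu_n-f_n(d_\Ga/\xi_n)]_+^{q-1}\,dy$), the same coarea rescaling $\si=\xi_n\tau$, the same appeal to Lemma~\ref{lem:geometrical-asymptotics} for the strip contribution, and the same identification of the bulk contribution with $\mu_n^{q-1}|B_R|$; the Gamma-function bookkeeping you allude to is also left implicit there.

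The one point where the paper is more explicit than your sketch is precisely the step you flag as ``the main obstacle'': to show that the bulk term really equals $\mu_n^{q-1}|B_R|$ in the limit, one needs $f_n(\si/\xi_n)/\mu_n\to 0$ for a.e.\ $\si>0$. The paper does this by first extracting from the balance equation and the strip asymptotics the a~priori bound $\mu_n^{1-q}\le c\,\xi_n^{-\frac{N+1}{2}}$, and then using the elementary chain
\[
\int_{\si/2\xi_n}^{\infty} f_n(\tau)^{q-1}\tau^{\frac{N-1}{2}}\,d\tau
\;\ge\;
\int_{\si/2\xi_n}^{\si/\xi_n} f_n(\tau)^{q-1}\tau^{\frac{N-1}{2}}\,d\tau
\;\ge\;
c'\,\si^{\frac{N+1}{2}}\Bigl(\frac{f_n(\si/\xi_n)}{\mu_n}\Bigr)^{q-1},
\]
which forces the right-hand side to vanish because the left-hand side is a tail of an integral that converges by hypothesis~(iv). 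This is the concrete mechanism behind your ``cross terms are lower order'' claim.
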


\begin{proof}
  From \cite{IMW}, we know that $\mu_n=\mu_{q,n}(x)$ is the only root of the equation
  \begin{equation}
\label{characterization-mu}
    \int_{B_R(x)}\left[f_n(d_\Ga/\xi_n)-\mu_n\right]_+^{q-1}\,dy=
\int_{B_R(x)}\left[\mu_n-f_n(d_\Ga/\xi_n)\right]_+^{q-1}\,dy,
  \end{equation}
where we mean $[t]_+=\max(0, t)$.
\par
Thus, if we set
$$
\Ga_\si=\{y\in B_R:\, d_\Ga(y)=\si\},
$$
by the co-area formula 
we get that
\begin{equation*}
  \int_{B_R(x)}\left[f_n(d_\Ga/\xi_n)-\mu_n\right]_+^{q-1}\,dy=\\
\int_{0}^{2R}\left[f_n(\si/\xi_n)-\mu_n\right]_+^{q-1}\cH_{N-1}\left(\Ga_\si\right)\,d\si,
\end{equation*}
that, after the change of variable $\si=\xi_n \tau$ and easy manipulations, leads to the formula:
\begin{multline*}
  \int_{B_R(x)}\left[f_n(d_\Ga/\xi_n)-\mu_n\right]_+^{q-1}\,dy=\\
\xi_n^{\frac{N+1}{2}}
\int_{0}^{2R/\xi_n}\left[f_n(\tau)-\mu_n\right]_+^{q-1}\tau^{\frac{N-1}{2}}\left[\frac{\cH_{N-1}\left(\Ga_{\xi_n\tau}\right)}{\left(\xi_n\tau\right)^{\frac{N-1}{2}}}\right]\,d\tau.
\end{multline*}
\par 
Therefore, since $\mu_n\to 0$ as $n\to\infty$, an inspection of the integrand at the right-hand side, assumptions (i)-(iv), and Lemma \ref{lem:geometrical-asymptotics} make it clear that we can apply the generalized dominated convergence theorem (see \cite{LL}) to infer that
\begin{multline}
\label{eq:formula-A-plus}
\lim_{n\to\infty}
\xi_n^{-\frac{N+1}{2}}\int_{B_R(x)}\left[f_n(d_\Ga/\xi_n)-\mu_n\right]_+^{q-1}\,dy=\\
\frac{(2R)^{\frac{N-1}{2}}\om_{N-1}}{(N-1)\sqrt{\Pi_\Ga(y_x)}}\int_{0}^{\infty}f(\tau)^{q-1}\tau^{\frac{N-1}{2}}\,d\tau.
\end{multline}
\par 
Next, by employing again the co-area formula, the right-hand side of \eqref{characterization-mu} can be re-arranged as
\begin{multline*}
\label{eq:A-minus-inequality}
  \int_{B_R(x)} [\mu_n-f_n(d_\Ga/\xi_n)]_+^{q-1}\,dy=
\mu_n^{q-1}\int_{0}^{2R} \left[1-\frac{f_n(\si/\xi_n)}{\mu_n}\right]_+^{q-1}\cH_{N-1}\left(\Ga_\si\right)\,d\si,
\end{multline*}
that leads to the formula
\begin{equation}
  \label{eq:formula-A-minus}
  \lim_{\ve\to 0^+}
\mu_\ve^{1-q}\int_{B_R(x)} [\mu_n-f_n(d_\Ga/\xi_n)]_+^{q-1}\,dy=
|B_R|,
\end{equation}
by dominated convergence theorem, if we can prove that
\begin{equation}
\label{pointwise}
\frac{f_n(\si/\xi_n)}{\mu_n}\to 0 \ \mbox{ as } \ n\to \infty,
\end{equation}
for almost every $\si\ge 0$.
Then, after straightforward computations, \eqref{eq:q-mean-barrier} will follow 
by putting together \eqref{characterization-mu}, \eqref{eq:formula-A-plus} and \eqref{eq:formula-A-minus}.
\par
We now complete the proof by proving that \eqref{pointwise} holds. From \eqref{characterization-mu}, \eqref{eq:formula-A-plus}, and the fact that
$$
\int_{B_R(x)}\left[\mu_n-f_n(d_\Ga/\xi_n)\right]_+^{q-1}\,dy\le \mu_n^{q-1} |B_R|,
$$
we have that there is a positive constant $c$ such that
$$
\mu_n^{1-q}\le c\,\xi_n^{-\frac{N+1}{2}}.
$$
Also, for every $\si>0$ we have that
\begin{multline*}
\int_{\si/2\xi_n}^\infty f_n(\tau)^{q-1}\,\tau^\frac{N-1}{2} d\tau\ge
\int_{\si/2\xi_n}^{\si/\xi_n} f_n(\tau)^{q-1}\,\tau^\frac{N-1}{2} d\tau\ge \\
\frac{2(1-2^{-\frac{N+1}{2}})}{N+1}\,f_n(\si/\xi_n)^{q-1}\left(\frac{\si}{\xi_n}\right)^\frac{N+1}{2}\ge \\
\frac{2(1-2^{-\frac{N+1}{2}})}{c\,(N+1)}\,\si^\frac{N+1}{2}\left\{\frac{f_n(\si/\xi_n)}{\mu_n}\right\}^{q-1}.
\end{multline*}
Thus, \eqref{pointwise} follows, since the first term of this chain of inequalities converges to zero as $n\to\infty$, under our assumptions on $f_n$ and $\xi_n$, in virtue of the generalized dominated convergence theorem.
\end{proof}

\begin{rem}
{\rm
The case $q=\infty$ is simpler. From \cite{IMW} and then the monotonicity of $f_n$ we obtain that:
\begin{multline*}
\mu_{\infty,n}(x)=\frac1{2}\left\{\min_{B_R(x)}f_n\left(d_\Ga/\xi_n\right)+\max_{B_R(x)}f_n\left(d_\Ga/\xi_n\right)\right\}= \\
\frac1{2}\left\{f_n\left(2R/\xi_n\right)+f_n(0)\right\}.
\end{multline*}
Thus, if we replace the assumptions (iii) and (iv) by $f_n(0)\to f(0)$ as $n\to\infty$, we conclude that
$\mu_{\infty,n}(x)\to f(0)/2$, since $f_n\left(2R/\xi_n\right)\to 0$ as $n\to\infty$. 
}
\end{rem}

\begin{thm}
  \label{th:q-mean}
Set $1<p\le \infty$. Let $x\in\Om$ be such that $B_R(x)\subset\Om$ and $\ol{B_R(x)}\cap(\RR^N\setminus\Om)=\{y_x\}$; suppose that $k_j(y_x)<\frac1{R}$, for every $j=1,\dots,N-1$.
\par 
Let $u^\ve$ be the bounded (viscosity) solution of \eqref{G-elliptic}-\eqref{elliptic-boundary} and, for $1<q\le\infty$, let $\mu_{q,\ve}(x)$ be the $q$-mean of $u^\ve$ on $B_R(x)$.
\par 
Then, if $1<q<\infty,$ we have that \eqref{q-mean} holds, that is
\begin{equation*}
  \label{eq:q-mean}
\lim_{\ve\to 0^+}
  \left(\frac{\ve}{R}\right)^{-\frac{N+1}{2(q-1)}}\mu_{q,\ve}(x)=
\frac{c_{N,q}}{\left\{(p')^{\frac{N+1}{2}}
\Pi_\Ga(y_x)\right\}^{\frac{1}{2(q-1)}}},
\end{equation*}
where
$$
c_{N,q}=\left\{\frac{2^{-\frac{N+1}{2}}N!}{(q-1)^{\frac{N+1}{2}}\Ga\left(\frac{N+1}{2}\right)}\right\}^{\frac1{q-1}}.
$$
\par 
If $q=\infty$, we simply have that $\mu_{\infty,\ve}(x)\to 1/2$ as $\ve\to 0$.
\end{thm}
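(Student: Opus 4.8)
The plan is to sandwich $\mu_{q,\ve}(x)$ between the $q$-means of the explicit lower and upper barriers $U^\ve(d_\Ga/(\ve/\sqrt{p'}))$ and $V^\ve(d_\Ga/(\ve/\sqrt{p'}))$ furnished by Lemma~\ref{barriers-for-c2-domain}, and then to compute the asymptotics of these $q$-means by applying Lemma~\ref{lem:q-mean}. Fix an arbitrary sequence $\ve_n\to 0^+$ and set $\xi_n=\ve_n/\sqrt{p'}$. By \eqref{eq:U-V-barriers} we have $U^{\ve_n}(d_\Ga/\xi_n)\le u^{\ve_n}\le V^{\ve_n}(d_\Ga/\xi_n)$ pointwise on $B_R(x)$, and since the $q$-mean is monotone with respect to the a.e.\ partial order (see \cite{IMW}), denoting by $\mu^U_{q,n}(x)$ and $\mu^V_{q,n}(x)$ the $q$-means on $B_R(x)$ of $U^{\ve_n}(d_\Ga/\xi_n)$ and $V^{\ve_n}(d_\Ga/\xi_n)$ respectively, we obtain
\[
\mu^U_{q,n}(x)\le\mu_{q,\ve_n}(x)\le\mu^V_{q,n}(x).
\]
Thus it suffices to show that $(R/\xi_n)^{\frac{N+1}{2(q-1)}}\mu^U_{q,n}(x)$ and $(R/\xi_n)^{\frac{N+1}{2(q-1)}}\mu^V_{q,n}(x)$ converge to a common limit as $n\to\infty$, and to identify it.

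Next I would check that both barrier profiles converge to the same limit $f(\tau)=e^{-\tau}$. Indeed, since $\cosh\te-1$ and $1-\cos\te$ vanish only at $\te=0$ while $(\sinh\te)^\al$ and $(\sin\te)^\al$ are integrable near $0$ (recall $\al+1=(N-1)/(p-1)>0$), the probability measures $d\nu^\si$, $d\mu^\si$ concentrate at $\te=0$ as $\si\to\infty$; as $\si_{\ve_n}=\sqrt{p'}\,r_e/\ve_n\to\infty$ and $e^{-\tau\cosh\te}$, $e^{-\tau\cos\te}$ are bounded and continuous in $\te$, this gives $U^{\ve_n}(\tau)\to e^{-\tau}$ and $V^{\ve_n}(\tau)\to e^{-\tau}$ for every fixed $\tau\ge 0$ (for $V^{\ve_n}$ only the first branch of its definition is active once $n$ is large, since then $\tau<\si_{\ve_n}$); the case $p=\infty$ is analogous and elementary. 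Moreover $U^{\ve_n}$ is decreasing because $\cosh\te\ge 1$, and one checks by an elementary symmetrization (and the reflection $\te\mapsto\pi-\te$ for the matching at $\tau=\si_{\ve_n}$) that $V^{\ve_n}$ is decreasing and continuous on $[0,\infty)$. Hence, with $\xi_n$ as above and $f(\tau)=e^{-\tau}$, both $f_n=U^{\ve_n}$ and $f_n=V^{\ve_n}$ satisfy hypotheses (i), (ii), (iii) of Lemma~\ref{lem:q-mean}.

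It then remains to verify hypothesis (iv), namely the convergence of $\int_0^\infty f_n(\tau)^{q-1}\tau^{\frac{N-1}{2}}\,d\tau$ to $\int_0^\infty e^{-(q-1)\tau}\tau^{\frac{N-1}{2}}\,d\tau=(q-1)^{-\frac{N+1}{2}}\Ga\!\left(\tfrac{N+1}{2}\right)$. For $U^{\ve_n}$ this is immediate: $\cosh\te\ge 1$ gives $U^{\ve_n}(\tau)\le e^{-\tau}$, hence the fixed integrable majorant $e^{-(q-1)\tau}\tau^{(N-1)/2}$, and dominated convergence applies. For $V^{\ve_n}$ the delicate point — and the main obstacle of the proof — is the transition regime $\tau\approx\si_{\ve_n}$. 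Writing, for $0\le\tau<\si_{\ve_n}$,
\[
V^{\ve_n}(\tau)=e^{-\tau}\,\frac{g(\si_{\ve_n}-\tau)}{g(\si_{\ve_n})},\qquad g(s)=\int_0^\pi e^{-s(1-\cos\te)}(\sin\te)^\al\,d\te,
\]
I would use Lemma~\ref{lem:technical} (the asymptotics $g(s)\sim c\,s^{-(\al+1)/2}$ as $s\to\infty$) to bound $V^{\ve_n}(\tau)\le C\,e^{-\tau}$ on $[0,\si_{\ve_n}/2]$ and $V^{\ve_n}(\tau)\le\eta_n\,e^{-(\tau-\si_{\ve_n}/2)}$ on $[\si_{\ve_n}/2,\si_{\ve_n}]$ with $\eta_n=C\,\si_{\ve_n}^{(\al+1)/2}e^{-\si_{\ve_n}/2}\to 0$, while on $[\si_{\ve_n},\infty)$ the second branch $\{\int_0^\pi e^{-\tau\cos\te}\,d\mu^0(\te)\}^{-1}$ is $\ve_n$-independent and contributes only a tail of a convergent integral. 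Splitting $\int_0^\infty$ accordingly: dominated convergence handles the portion over $[0,\si_{\ve_n}/2]$, and the remaining two portions tend to $0$ as $n\to\infty$. This yields (iv) for $V^{\ve_n}$.

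Finally, Lemma~\ref{lem:q-mean} gives, for $g_n\in\{U^{\ve_n},V^{\ve_n}\}$,
\[
\lim_{n\to\infty}\left(\frac{R}{\xi_n}\right)^{\frac{N+1}{2(q-1)}}\!\!\mu^{g_n}_{q,n}(x)=\left\{\frac{2^{-\frac{N+1}{2}}N!}{(q-1)^{\frac{N+1}{2}}\Ga\!\left(\frac{N+1}{2}\right)}\right\}^{\frac1{q-1}}[\Pi_\Ga(y_x)]^{-\frac1{2(q-1)}}=c_{N,q}\,[\Pi_\Ga(y_x)]^{-\frac1{2(q-1)}},
\]
after inserting the value of the $\Ga$-integral and simplifying the constant. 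Since $(R/\xi_n)^{\frac{N+1}{2(q-1)}}=(p')^{\frac{N+1}{4(q-1)}}(R/\ve_n)^{\frac{N+1}{2(q-1)}}$ and $(p')^{-\frac{N+1}{4(q-1)}}=\{(p')^{\frac{N+1}{2}}\}^{-\frac1{2(q-1)}}$, the squeeze from the first paragraph then yields $\lim_n(R/\ve_n)^{\frac{N+1}{2(q-1)}}\mu_{q,\ve_n}(x)=c_{N,q}\{(p')^{\frac{N+1}{2}}\Pi_\Ga(y_x)\}^{-\frac1{2(q-1)}}$; the sequence $\ve_n\to 0^+$ being arbitrary, \eqref{q-mean} follows. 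For $q=\infty$, the same sandwiching together with the remark following Lemma~\ref{lem:q-mean} (applied with $f_n(0)=1$, so $f(0)=1$, and $f_n(2R/\xi_n)\to 0$) gives $\mu_{\infty,\ve}(x)\to f(0)/2=1/2$.
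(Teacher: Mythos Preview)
Your proof is correct and follows the same strategy as the paper's: sandwich $u^\ve$ between the barriers $U^\ve$ and $V^\ve$ of Lemma~\ref{barriers-for-c2-domain}, use monotonicity of $q$-means, and apply Lemma~\ref{lem:q-mean} with $\xi_n=\ve_n/\sqrt{p'}$ and limit profile $f(\tau)=e^{-\tau}$. Your verification of hypothesis (iv) for $V^{\ve_n}$ via the three-interval splitting $[0,\si_{\ve_n}/2]\cup[\si_{\ve_n}/2,\si_{\ve_n}]\cup[\si_{\ve_n},\infty)$ is more explicit than the paper's, which simply cites Lemma~\ref{lem:mollifier} for (iii) and invokes dominated convergence for (iv) without spelling out a dominating function.
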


\begin{proof}
We have that $\mu_{q,\ve}^{U^\ve}(x)\le \mu_{q,\ve}(x)\le \mu_{q,\ve}^{V^\ve}(x)$ by the monotonicity properties
of the $q$-means, where with $\mu_{q,\ve}^{U_\ve}$ and $\mu_{q,\ve}^{V^\ve}$ we denote the $q$-mean of $U^\ve(d/\ve)$ and $V^\ve(d/\ve)$ on $B_R(x)$.
Hence, in order to prove \eqref{q-mean}, we only need to apply Lemma \ref{lem:q-mean} to  $f_n=U^{\ve_n}$ and $f_n=V^{\ve'_n}$, where the vanishing sequences $\ve_n$ and $\ve'_n$ are chosen so that the $\liminf$ and $\limsup$ of $\left(\ve/R\right)^{-\frac{N+1}{2(q-1)}}\mu_{q,\ve}(x)$
as $\ve\to 0$ are attained along them, respectively.
\par
By an inspection, it is not difficult to check that $f_n=U^{\ve_n}$ and $f_n=V^{\ve'_n}$, with $\xi_\ve=\ve/\sqrt{p'}$ and $f(\tau)=e^{-\tau}$, satisfy the relevant assumptions of Lemma \ref{lem:q-mean}, by applying, in particular, Lemma \ref{lem:mollifier} for (iii) and the dominated convergence theorem for (iv).
\end{proof} 

\appendix

\section{Technical lemmas}
\label{sec:appendix-technical-lemma}

Here, we collect two useful lemmas.

\begin{lem}[One-dimensional asymptotics]
\label{lem:technical}
For $\al>-1$ and $\si>0$, let
$$
f(\si)=\int_0^\infty e^{-\si (\cosh\te-1)} (\sinh\te)^\al d\te.
$$ 
\par
Then, $f$ is continuous in $(0,\infty)$ and
\begin{equation*}
\label{eq:asymptotics-to-infinity}
f(\si)=
2^\frac{\al-1}{2} \Ga\left(\frac{\al+1}{2}\right)\si^{-\frac{\al+1}{2}} \bigl\{ 1+O(1/\si)\bigr\} \ \mbox{ as } \ \si\to\infty.
\end{equation*}
Moreover, if $\si\to 0$, we have that
$$
f(\si)=\begin{cases}
\si^{-\al}\, \Ga\left(\al\right)\bigl\{ 1+o(1)\bigr\}  \ &\mbox{ if } \ \al>0, 
\vspace{6pt}
\\
\log(1/\si)+O(1) \ &\mbox{ if } \ \al=0, 
\vspace{2pt}
\\
\frac{\sqrt{\pi}}{2\,\sin(\al\pi/2)}\,\frac{\Ga\left(\frac{\al+1}{2}\right)}{\Ga\left(\frac{\al}{2}+1\right)}+o(1)  \ &\mbox{ if } \ -1<\al<0.
\end{cases}
$$
\end{lem}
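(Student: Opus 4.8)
The plan is to analyze the integral $f(\si)=\int_0^\infty e^{-\si(\cosh\te-1)}(\sinh\te)^\al\,d\te$ by Laplace-type asymptotics. \emph{Continuity} on $(0,\infty)$ is immediate: for $\si$ in a compact subinterval $[a,b]\subset(0,\infty)$ the integrand is dominated by $e^{-a(\cosh\te-1)}(\sinh\te)^\al$, which is integrable (exponential decay at $\infty$ absorbs the polynomial growth of $(\sinh\te)^\al$, and near $\te=0$ one has $(\sinh\te)^\al\sim\te^\al$ which is integrable since $\al>-1$), so dominated convergence applies.

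For the regime $\si\to\infty$, the mass concentrates near $\te=0$. First I would substitute $\te$ so as to flatten the phase: set $s=\cosh\te-1$, so that $ds=\sinh\te\,d\te$ and $(\sinh\te)^{\al}\,d\te=(\sinh\te)^{\al-1}\,ds=\bigl(s(s+2)\bigr)^{(\al-1)/2}\,ds$. This gives
\begin{equation*}
f(\si)=\int_0^\infty e^{-\si s}\,\bigl(s(s+2)\bigr)^{\frac{\al-1}{2}}\,ds
=2^{\frac{\al-1}{2}}\int_0^\infty e^{-\si s}\,s^{\frac{\al-1}{2}}\,(1+s/2)^{\frac{\al-1}{2}}\,ds.
\end{equation*}
Since $(1+s/2)^{(\al-1)/2}=1+O(s)$ near $s=0$ and the Laplace weight $e^{-\si s}s^{(\al-1)/2}$ has total mass $\Ga\bigl(\frac{\al+1}{2}\bigr)\si^{-(\al+1)/2}$ with the tail contribution exponentially small, a standard Watson's-lemma argument yields
\begin{equation*}
f(\si)=2^{\frac{\al-1}{2}}\,\Ga\!\left(\tfrac{\al+1}{2}\right)\si^{-\frac{\al+1}{2}}\bigl\{1+O(1/\si)\bigr\}
\end{equation*}
as $\si\to\infty$; the $O(1/\si)$ comes from integrating the linear term of $(1+s/2)^{(\al-1)/2}$ against $e^{-\si s}s^{(\al-1)/2}$, which produces the next power $\si^{-(\al+3)/2}$.

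For $\si\to 0^+$ I would split $f(\si)=\int_0^1+\int_1^\infty$. On $[1,\infty)$ the integrand is bounded by $e^{-\si(\cosh 1-1)}(\sinh\te)^\al e^{-c\,\si(\cosh\te-\cosh 1)}$-type estimates and stays bounded as $\si\to0$ — in fact $\int_1^\infty$ tends to a finite constant by monotone convergence when $-1<\al<0$ (so $\cosh^\al$-type behavior at infinity... actually for $\al\ge 0$ this piece blows up, so more care: for $\al\ge 0$ the divergence as $\si\to0$ comes from large $\te$). Cleaner is to use again the substitution $s=\cosh\te-1$: $f(\si)=2^{(\al-1)/2}\int_0^\infty e^{-\si s}s^{(\al-1)/2}(1+s/2)^{(\al-1)/2}\,ds$. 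Rescale $s=t/\si$:
\begin{equation*}
f(\si)=2^{\frac{\al-1}{2}}\,\si^{-\frac{\al+1}{2}}\int_0^\infty e^{-t}\,t^{\frac{\al-1}{2}}\bigl(1+t/(2\si)\bigr)^{\frac{\al-1}{2}}\,dt.
\end{equation*}
When $\al>0$, $(\al-1)/2$ may be negative, zero, or positive; as $\si\to0$ the factor $(1+t/(2\si))^{(\al-1)/2}\sim (t/(2\si))^{(\al-1)/2}$ for the bulk of the mass, giving $f(\si)\sim 2^{(\al-1)/2}\si^{-(\al+1)/2}(2\si)^{-(\al-1)/2}\int_0^\infty e^{-t}t^{\al-1}\,dt=\si^{-\al}\Ga(\al)$, which matches the claim. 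The regimes $\al=0$ and $-1<\al<0$ require tracking the cross-over more carefully: for $\al=0$ the integral $\int_0^\infty e^{-\si s}(s(s+2))^{-1/2}ds$ behaves like $\int_\si^1 ds/s\sim\log(1/\si)$ plus an $O(1)$ remainder from $s\lesssim\si$ and $s\gtrsim 1$; for $-1<\al<0$ the exponent $(\al-1)/2<-1/2$ makes $\int_1^\infty (s(s+2))^{(\al-1)/2}ds$ \emph{convergent}, so $f(0^+)$ is finite and equals $2^{(\al-1)/2}\int_0^\infty(s(s+2))^{(\al-1)/2}ds$, which one evaluates via the Beta function ($\int_0^\infty s^{a-1}(s+2)^{-a-b}ds=2^{-b}B(a,b)$ with suitable $a,b$) to get exactly $\frac{\sqrt\pi}{2\sin(\al\pi/2)}\frac{\Ga((\al+1)/2)}{\Ga(\al/2+1)}$ after reflection-formula simplification.

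The main obstacle is the small-$\si$ analysis in the three separate ranges of $\al$: one must locate precisely where the integrand's mass sits (near $s\sim\si$ versus the bulk $s\sim1$) and show the error terms are genuinely $o(1)$ relative to the stated leading behavior — in particular the logarithmic borderline case $\al=0$ and the Beta-integral identity plus gamma-function gymnastics needed to recognize the limiting constant for $-1<\al<0$. The large-$\si$ part is routine Watson's lemma once the $s$-substitution is made.
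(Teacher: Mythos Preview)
Your approach is correct and, for the large-$\si$ regime and the small-$\si$ regime with $\al>0$, essentially coincides with the paper's: both use the substitution $s=\cosh\te-1$ (the paper scales it as $\tau=\si(\cosh\te-1)$, which amounts to the same thing) and then factor out the claimed power of $\si$, leaving an integral that converges to the relevant Gamma value with a controlled remainder. Your framing this as Watson's lemma is just the name for what the paper does by hand.

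The genuine difference is in the small-$\si$ regime for $-1<\al\le 0$. The paper does not analyze the integral directly there; instead it recognizes that
\[
f(\si)=\frac{1}{\sqrt{\pi}}\,\Ga\!\left(\tfrac{\al+1}{2}\right)\left(\tfrac{\si}{2}\right)^{-\al/2}e^{\si}K_{\al/2}(\si),
\]
where $K_{\al/2}$ is the modified Bessel function of the second kind, and then simply cites the known small-argument asymptotics of $K_\nu$ from Abramowitz--Stegun (formulas 9.6.9 and 9.6.13) to read off the $\log(1/\si)$ behavior for $\al=0$ and the finite limit for $-1<\al<0$. Your route---observing for $-1<\al<0$ that $\int_0^\infty(s(s+2))^{(\al-1)/2}\,ds$ converges and evaluating it via the Beta integral plus reflection, and for $\al=0$ estimating $\int_0^\infty e^{-\si s}(s(s+2))^{-1/2}ds$ directly---is more self-contained and avoids special-function machinery, at the cost of the Gamma-function manipulations you flag as an obstacle. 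Both are valid; the paper's is shorter because it outsources the work to a handbook.
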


\begin{proof}
By the change of variable $\tau=\si\, (\cosh\te-1)$ we get:
$$
f(\si)=\frac1{\si}\int_0^\infty e^{-\tau} \left(\frac{2\tau}{\si}+\frac{\tau^2}{\si^2}\right)^\frac{\al-1}{2} \!\!d\tau.
$$
When $\si\to\infty$, our claim follows by writing
$$
f(\si)=2^\frac{\al-1}{2} \si^{-\frac{\al+1}{2}} \int_0^\infty e^{-\tau}\left(\tau+\frac{\tau^2}{2\si}\right)^\frac{\al-1}{2}\!\!d\tau.
$$
When $\si\to 0$ and $\al>0$, our claim follows by writing
$$
f(\si)=\si^{-\al} \int_0^\infty e^{-\tau}\left(\tau^2+2\si\,\tau\right)^\frac{\al-1}{2}\!\!d\tau,
$$
since in this case the (limiting) integral converges near zero.
\par
For $-1<\al\le 0$, we use the formula
$$
f(\si)=\frac1{\sqrt{\pi}}\,\Ga\left(\frac{\al+1}{2}\right)\left(\frac{\si}{2}\right)^{-\frac{\al}{2}} e^\si\, K_{\al/2}(\si),
$$
where $K_{\al/2}(\si)$ is the modified Bessel's function of the second kind of order $\al/2$ (see \cite[Formula 9.6.23]{AS}). Then, \cite[Formula 9.6.9]{AS} and \cite[Formula 9.6.13]{AS} give our claims for $\al=0$ and $-1<\al<0$, respectively.
\end{proof}

\begin{lem}[Mollifier]
\label{lem:mollifier}
 Let $g:\RR\to \RR$ be a bounded and continuous function.
\par
Then, we have that
  \begin{equation*}
\label{nu-t}
\lim_{\si\to\infty}
\int_{0}^{\infty}g(\te)\left[\frac{e^{-\si(\cosh\te-1)}(\sinh\te)^\al}{\int_{0}^{\infty}e^{-\si(\cosh\te-1)}(\sinh\te)^\al}\right]\,d\te=
g(0)
  \end{equation*}
and
\begin{equation*}
\label{mu-t}
  \lim_{\si\to \infty}
\int_{0}^{\pi}g(\te)\left[\frac{e^{-\si(1-\cos\te)}(\sin\te)^\al}{\int_{0}^{\pi}e^{-\si(1-\cos\te)}(\sin\te)^\al\,d\te}\right]\,d\te=
g(0).
\end{equation*}
\end{lem}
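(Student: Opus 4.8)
The final statement to prove is Lemma \ref{lem:mollifier}, the two mollification limits.

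\medskip

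The plan is to recognize each bracketed expression as a probability density on its interval (these are exactly the densities $d\nu^\si$ and $d\mu^\si$ introduced just before Lemma \ref{barriers-for-c2-domain}), and to show that as $\si\to\infty$ the corresponding probability measure concentrates at $\te=0$. For a bounded continuous $g$, concentration at $0$ immediately yields that the integral tends to $g(0)$, by the standard argument: split $g=g(0)+[g-g(0)]$, use that the total mass is $1$ for the first piece, and bound the second piece by $\|g\|_\infty$ times the mass outside a small neighborhood of $0$ plus the sup of $|g-g(0)|$ on that neighborhood.

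\medskip

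So the core is a Laplace-type concentration estimate. For the hyperbolic case, the density is $e^{-\si(\cosh\te-1)}(\sinh\te)^\al / f(\si)$ with $f$ as in Lemma \ref{lem:technical}; the denominator's asymptotics $f(\si)\sim 2^{(\al-1)/2}\Ga\!\left(\tfrac{\al+1}{2}\right)\si^{-(\al+1)/2}$ are already available from that lemma. For the numerator's contribution away from $0$: fix $\de>0$; on $[\de,\infty)$ one has $\cosh\te-1\ge c_\de>0$ and moreover $\cosh\te-1\ge \tfrac12(\cosh\te-1)+\tfrac12 c_\de$ for $\te\ge\de$, so
\[
\int_\de^\infty e^{-\si(\cosh\te-1)}(\sinh\te)^\al\,d\te
\le e^{-\tfrac{\si}{2}c_\de}\int_\de^\infty e^{-\tfrac{\si}{2}(\cosh\te-1)}(\sinh\te)^\al\,d\te
\le e^{-\tfrac{\si}{2}c_\de}\,f(\si/2),
\]
which, after dividing by $f(\si)$ and using the polynomial asymptotics of $f$, goes to $0$ exponentially fast. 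Hence the mass of $d\nu^\si$ on $[\de,\infty)$ tends to $0$, giving concentration at $0$. The trigonometric case is handled in exactly the same way: the density is $e^{-\si(1-\cos\te)}(\sin\te)^\al / \int_0^\pi e^{-\si(1-\cos\te)}(\sin\te)^\al\,d\te$; one checks (e.g. by the substitution $\te\mapsto\pi-\te$ near $\pi$, or directly) that this denominator also has polynomial decay $\asymp \si^{-(\al+1)/2}$ as $\si\to\infty$ — this follows from Lemma \ref{lem:technical} too, since near $\te=0$ we have $1-\cos\te \sim \te^2/2 \sim \cosh\te-1$ and $\sin\te\sim\sinh\te$, so the small-$\te$ behavior of the two integrals agrees, while the contributions from $\te$ bounded away from $0$ are exponentially smaller — and then the same splitting argument shows the mass of $d\mu^\si$ on $[\de,\pi]$ vanishes.

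\medskip

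The main (mild) obstacle is just the bookkeeping needed to pin down that the trigonometric denominator $\int_0^\pi e^{-\si(1-\cos\te)}(\sin\te)^\al\,d\te$ has the right order as $\si\to\infty$, since Lemma \ref{lem:technical} is stated for the hyperbolic integral; but this is routine, either by comparing integrands on a neighborhood of $0$ (where the two integrands are asymptotically equal) and discarding the exponentially small remainder, or by a direct Watson's-lemma computation $\int_0^\pi e^{-\si(1-\cos\te)}(\sin\te)^\al\,d\te \sim 2^{(\al-1)/2}\Ga\!\left(\tfrac{\al+1}{2}\right)\si^{-(\al+1)/2}$. Once this is in hand there is no further difficulty: everything reduces to the elementary concentration-of-measure argument for a bounded continuous test function.
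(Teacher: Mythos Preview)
Your proposal is correct and follows the same strategy as the paper: recognize the bracketed densities as probability measures (mollifiers) and show they concentrate at $\te=0$, which yields the limit $g(0)$ for any bounded continuous $g$. The only minor difference is in verifying that the tail mass $\int_\de^\infty j_\si\,d\te\to 0$: the paper does this directly via the substitution $\tau=\si(\cosh\te-1)$ (resp.\ $\tau=\si(1-\cos\te)$), whereas you split the exponent and invoke the asymptotics of Lemma~\ref{lem:technical}; both arguments are valid.
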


\begin{proof}
The two formulas in the statement follow by observing that in both cases the relevant integral can be written as 
$$
\int_0^\infty j_\si(\te)\, g(\te)\,d\te,
$$
where $j_\si$ is a {\it mollifier}, that has the salient properties:
\begin{multline*}
j_\te\ge 0, \quad \int_0^\infty j_\si(\te)\,d\te=1 \ \mbox{ for any } \ \si>0 \ \mbox{ and } \\ \lim_{\si\to\infty}\int_\de^\infty j_\si(\te)\,d\te=0 \ \mbox{ for any } \ \de>0.
\end{multline*}
The last property easily follows from the substitution $\tau=\si\,(\cosh\te-1)$ or $\tau=\si\,(1-\cos\te)$.
\end{proof}

\section*{Aknowledgements}
The paper was partially supported by the Gruppo Nazionale Analisi Matematica Probabilit\`a e Applicazioni (GNAMPA) of the Istituto Nazionale di Alta Matematica (INdAM).

\end{document}